\newtheorem{theorem}{Theorem}[section]
\newtheorem{corollary}[theorem]{Corollary}
\newtheorem{lemma}[theorem]{Lemma}
\date{}
\title{\textbf{An Optimal Algorithm for Stopping on the Element Closest to the Center of an Interval \footnote{The research was partially supported by NCN Grant DEC-2015/17/B/ST6/01868.}}\\
\large{}}
\author{Ewa M. Kubicka, 
Grzegorz Kubicki,
Ma{\l}gorzata Kuchta,\\
and Ma{\l}gorzata Sulkowska}
\begin{document}
\maketitle

\begin{abstract}
Real numbers from the interval [0, 1] are randomly selected with uniform distribution.  There are $n$ of them and they are revealed one by one. However, we do not know their values but only their relative ranks. We want to stop on recently revealed number maximizing the probability that that number is closest to $\frac{1}{2}$.  We design an optimal stopping algorithm achieving our goal and prove that its probability of success is asymptotically equivalent to $\frac{1}{\sqrt{n}}\sqrt{\frac{2}{\pi}}$.\\

\begin{tabular}{lp{8cm}}
2010 \textit{Mathematics subject classification:}&{Primary 60G40} \\&{Secondary 90C27}
\end{tabular}
\end{abstract}

\section{Introduction} 
Consider the following online problem: $n$ numbers randomly selected from the interval [0, 1] are presented to us one number at a time. After revealing $k$ numbers, $1 \le k \le n$, we know their ranks but not their values. We ignore the zero probability event that some of them are equal. Our goal is to stop on the presently revealed number $x_k$ hoping that  $x_k$ is closest to $\frac{1}{2}$, the center of the interval, among all $n$ numbers.
We will construct an optimal stopping algorithm and show that this algorithm, for large values of $n$, has the probability of success of order $\frac{1}{\sqrt{n}}\sqrt{\frac{2}{\pi}}$.

This problem is a new relative of the classical secretary problem. In classical secretary problem, $n$ candidates are linearly ordered and our goal is to stop on the best candidate. In our model, if the objective were to stop on the element closest to 1, then  the problem would be equivalent to the original secretary problem. The classical secretary problem, with its solution written down by Lindley  \cite{L}  in 1961, attracted a lot of attention and has been considered in various modifications. The paper  \cite{F} provides nice and deep survey of this research. Many generalizations of classical problem are studied, for example if linear order was replaced by partial order 
 \cite{M},  \cite{P},  \cite{FW}, \cite{GKMN},  \cite{S}  or just by graph or digraph structure  \cite{KM},  \cite{GKK}, \cite{Su}, \cite{BS}. One of the versions of the secretary problem similar to our model is to stop on the element of middle rank. It was considered in \cite{R} under a variety of circumstances.
That problem is however different than the topic of this paper. Stopping on the middle rank element does not guarantee that this number would be closest to $\frac{1}{2}$. On the other hand, in our case stopping on elements other than the middle one gives nonzero probability of success.
 
 The paper is organized as follows. In Section \ref{sec_optimal}, we construct an optimal stopping algorithm and justify a formula for its probability of success. We use backwards induction to do so. Not surprisingly, the algorithm tells us to stop late and  only on the numbers having ranks not far from the middle. We provide an example how this algorithm works for $n = 10$ and how the stopping region looks like.
 The asymptotic performance of the algorithm is analyzed in Section \ref{sec_asymptotics}. First we contract an algorithm that is not optimal but has more regular stopping region which allows us to estimate the asymptotic performance of our algorithm from below. Then we consider a little easier online problem in which the optimal strategy has the asymptotic performance that is easy to calculate and provides an upper bound for our algorithm. It happens that those bounds are identical just proving that the asymptotic probability of success of the optimal stopping algorithm is of order $\sqrt{\frac{2}{\pi}}\frac{1}{\sqrt{n}}$.
 
 \section{Optimal Stopping Algorithm} \label{sec_optimal}
 Assume that $n$ different numbers $x_1, x_2, ..., x_n$ from the interval [0, 1] are randomly selected, with uniform distribution, and presented to us one by one. We know $n$ in advance but after revealing $t$ numbers, $1 \le t \le n$, we know only their relative ranks, not their values. Let's rename them such that, at that moment, we know their order $y_1^{(t)} < y_2^{(t)} < ... < y_t^{(t)}$ and we know that the rank of $x_t$ is $r$; it means that $x_t = y_r^{(t)}$. Our goal is to stop on the presently revealed number $x_t$ maximizing the probability that $|x_t - \frac{1}{2}| \le |x_i - \frac{1}{2}|$ for all $i$, $1 \le i \le n$, the probability that $x_t$ will be the closest to the midpoint of the interval; we will call such an event ``$x_t$ is the best''.
 
Before constructing optimal stopping algorithm (it will be denoted by $\mathcal{A}_n$), we need two results providing formulas for the probability that the number of specific rank is the best.
 
 \begin{theorem}\label{bestprob}
 If $ y_1 < y_2 < ... < y_r < ... < y_n$ are ranked numbers revealed at time $n$, then 
 $$Pr (y_r \text{ is the best}) =  \binom{n-1}{r-1}  \cdot \frac{1} {2^{n-1}}.$$
 \end{theorem}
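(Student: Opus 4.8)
The plan is to reduce the statement to two elementary facts: the binomial distribution of the count of values landing left of $\tfrac12$, and the symmetry of the minimum of i.i.d.\ uniform variables. The key structural observation is that the ``best'' number is pinned to one of two ranks. Let $a$ denote the number of the $n$ selected values lying in $[0,\tfrac12]$; then those $a$ values are exactly $y_1<\dots<y_a$ and the remaining $n-a$ are $y_{a+1}<\dots<y_n$. Since $|x-\tfrac12|$ is decreasing in $x$ on $[0,\tfrac12]$ and increasing on $[\tfrac12,1]$, the value closest to $\tfrac12$ is either $y_a$ (the largest value below $\tfrac12$) or $y_{a+1}$ (the smallest value above $\tfrac12$). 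Hence $y_r$ can be the best only if $a=r$ and $y_a$ wins, or $a=r-1$ and $y_{a+1}$ wins, so
$$Pr(y_r\text{ is the best})=Pr(a=r,\ y_a\text{ wins})+Pr(a=r-1,\ y_{a+1}\text{ wins}).$$

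Next I would evaluate each term by conditioning on the value of $a$. Each $x_i$ independently falls in $[0,\tfrac12]$ with probability $\tfrac12$, so $a\sim\mathrm{Bin}(n,\tfrac12)$ and $Pr(a=k)=\binom{n}{k}2^{-n}$. Conditioned on $\{a=k\}$, the $k$ ``left'' values are i.i.d.\ uniform on $[0,\tfrac12]$ and the $n-k$ ``right'' values are i.i.d.\ uniform on $[\tfrac12,1]$, independently of each other; reflecting the right ones about $\tfrac12$ turns the $n$ distances $|x_i-\tfrac12|$ into $n$ i.i.d.\ uniform variables on $[0,\tfrac12]$, of which a fixed subset of size $k$ comes from left values. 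The best number attains the smallest distance, so by exchangeability the probability it is a left value is $k/n$. Thus $Pr(a=k,\ y_k\text{ wins})=\binom{n}{k}2^{-n}\cdot\tfrac{k}{n}$ and $Pr(a=k,\ y_{k+1}\text{ wins})=\binom{n}{k}2^{-n}\cdot\tfrac{n-k}{n}$.

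Finally I would substitute $k=r$ into the first term and $k=r-1$ into the second. Using the identities $\binom{n}{r}\tfrac{r}{n}=\binom{n-1}{r-1}$ and $\binom{n}{r-1}\tfrac{n-r+1}{n}=\binom{n-1}{r-1}$, both contributions equal $\binom{n-1}{r-1}2^{-n}$, and their sum is $\binom{n-1}{r-1}2^{-(n-1)}$, as claimed. The boundary cases $r=1$ and $r=n$, where one of $a=r$ or $a=r-1$ forces an empty side, are checked directly and agree with the formula. The only genuinely substantive point is the structural observation of the first paragraph that fixes the winner's rank in terms of $a$; after that, the argument is just the exchangeability of the minimum of i.i.d.\ uniforms together with a one-line binomial identity, so I anticipate no real obstacle.
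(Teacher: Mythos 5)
Your proposal is correct and follows essentially the same route as the paper: you decompose the event according to whether exactly $r$ or $r-1$ of the values lie below $\tfrac12$ (the paper's events $y_r<\tfrac12<y_{r+1}$ and $y_{r-1}<\tfrac12<y_r$), obtain the conditional winning probabilities $\tfrac{r}{n}$ and $\tfrac{n-r+1}{n}$ via the same reflection/exchangeability argument for the minimum of i.i.d.\ uniforms, and finish with the same binomial identities. No gaps; the argument matches the paper's proof in substance.
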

 \begin{proof}
 We have 
 \begin{align*}
 \begin{split}
  \text{Pr}(y_r \text{ is the best}) & 
 = \text{Pr}\Big(\Big(y_{r} < {1 \over 2} < y_{r+1} \Big) \text{ and } \Big( |y_r - \frac{1}{2}| \le |y_{r+1} - \frac{1}{2}|\Big) \Big) \\
 & +  \text{Pr}\Big(\Big(y_{r-1} < \frac{1}{2} < y_{r} \Big) \text{ and }
   \Big(|y_{r-1} - \frac{1}{2}| \ge |y_{r} - \frac{1}{2}|\Big)\Big) 
\end{split}
\end{align*}
Note that
\begin{align*}
\begin{split}
\text{Pr}\Big(\Big(|y_r - \frac{1}{2}| & \le |y_{r+1} - \frac{1}{2}|\Big) \Big| \Big( y_{r} < {1 \over 2} < y_{r+1}\Big) \Big) \\
& = \text{Pr}\Big( \min\{Z_1, Z_2, \ldots, Z_r\} < \min\{Z_{r+1}, Z_{r+2}, \ldots, Z_n\} \Big) = \frac{r}{n},
\end{split}
\end{align*}
where $Z_1, Z_2, \ldots, Z_n$ are independent random variables drawn uniformly from the interval $[0,1/2]$. Analogously we get
$$
\text{Pr}\Big(\Big(|y_{r-1} - \frac{1}{2}| \ge |y_{r} - \frac{1}{2}|\Big) \Big| \Big( y_{r-1} < \frac{1}{2} < y_{r} \Big) \Big) = \frac{n-r+1}{n}
$$
and finally
\begin{align*}
\begin{split}
  \text{Pr}(y_r \text{ is the best}) & 
	 = \binom{n}{r} \cdot \frac{1}{2^n}  \cdot \frac{r}{n} +  \binom{n}{r-1} \cdot \frac{1}{2^n}  \cdot \frac{n-r+1}{n} \\
 & = \frac{1}{2^n} \bigg[ \frac{(n-1)!}{(r-1)!(n-r)!} + \frac{(n-1)!}{(r-1)!(n-r)!} \bigg] =\binom{n-1}{r-1} \cdot \frac{1}{2^{n-1}}.
\end{split}
\end{align*}
\end{proof}

\begin{theorem}\label{generalprob}
 If $ y_1^{(t)} < y_2^{(t)} < ... < y_r^{(t)} < ... < y_t^{(t)}$ are ranked numbers revealed at time $t$, then 
 \begin{align}\label{will-be-best}
 \begin{split}
	Pr (y_r^{(t)} \text{ will be the best}) =   \frac{1}{2^{n-1}} \sum\limits_{j=0}^{n-t}\binom{n-1}{r-1+j} \binom{n-t}{j}  \frac{r^j(t+1-r)^{n-t-j}}{(t+1)^{n-t}}.
 \end{split}
 \end{align}
 \end{theorem}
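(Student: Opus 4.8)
The plan is to condition on the eventual rank of $y_r^{(t)}$ among all $n$ numbers and then invoke Theorem~\ref{bestprob}. Write $S$ for that eventual rank. Since exactly $r-1$ of the first $t$ revealed numbers lie below $y_r^{(t)}$, we have $S=r+J$, where $J\in\{0,1,\dots,n-t\}$ counts how many of the $n-t$ still-unrevealed numbers turn out to be smaller than $y_r^{(t)}$. By the law of total probability,
\begin{align*}
\Pr\big(y_r^{(t)}\text{ will be the best}\big)=\sum_{j=0}^{n-t}\Pr(J=j)\cdot\Pr\big(y_r^{(t)}\text{ is the best}\,\big|\,S=r+j\big).
\end{align*}

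For the conditional factor I would use the classical fact that, for $n$ i.i.d.\ samples drawn uniformly from $[0,1]$, the vector of order statistics $Y_1<\dots<Y_n$ is independent of the random permutation that records which sample attained which rank. The information available to us at time $t$ (the relative order of $x_1,\dots,x_t$) and the event $\{S=r+j\}$ are both functions of that permutation only; on $\{S=r+j\}$ the statement ``$y_r^{(t)}$ is the best'' is precisely ``the order statistic $Y_{r+j}$ is the one closest to $\tfrac12$'', which is a function of the order statistics alone. By independence its conditional probability equals its unconditional probability, which by Theorem~\ref{bestprob} is $\binom{n-1}{r+j-1}/2^{n-1}$.

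It remains to determine the law of $J$. Here I would observe that the set of eventual ranks occupied by $x_1,\dots,x_t$ is a uniformly random $t$-subset $A$ of $\{1,\dots,n\}$, and that $y_r^{(t)}$ receives the $r$-th smallest rank in $A$; counting the $t$-subsets whose $r$-th smallest element equals $m$ gives $\Pr(S=m)=\binom{m-1}{r-1}\binom{n-m}{t-r}\big/\binom{n}{t}$, i.e.\ $\Pr(J=j)=\binom{r+j-1}{r-1}\binom{n-r-j}{t-r}\big/\binom{n}{t}$ for $0\le j\le n-t$. (Equivalently, conditioned on being the rank-$r$ element among the first $t$, the value of $y_r^{(t)}$ has a $\mathrm{Beta}(r,\,t+1-r)$ density, and given that value $x$ the count $J$ is $\mathrm{Binomial}(n-t,x)$; integrating the binomial mass function against the beta density reproduces the same expression.) Substituting this into the displayed sum and rewriting the three binomial coefficients through Pochhammer symbols (rising factorials) turns $\Pr(J=j)$ into $\binom{n-t}{j}\,r^{(j)}(t+1-r)^{(n-t-j)}/(t+1)^{(n-t)}$, which is exactly the factor appearing in $(\ref{will-be-best})$; this completes the proof.

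The step I expect to demand the most care is the independence argument in the second paragraph: one must argue cleanly that ``being closest to $\tfrac12$'', after being reduced to a statement about the eventual rank, is measurable with respect to the order statistics, while the conditioning is measurable with respect to the independent sorting permutation, so that Theorem~\ref{bestprob} applies verbatim with $r+j$ in place of $r$ and $n$ fixed. The rest is routine combinatorial and factorial bookkeeping.
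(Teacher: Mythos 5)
Your overall architecture is the same as the paper's: decompose over the eventual rank $r+J$ of $y_r^{(t)}$ and apply Theorem~\ref{bestprob} to the conditional factor. That part is sound, and your order-statistics/permutation independence argument for $\Pr\big(y_r^{(t)}\text{ best}\mid S=r+j\big)=\binom{n-1}{r-1+j}/2^{n-1}$ is a cleaner justification of a step the paper makes only implicitly. The genuine gap is your last sentence. The law of $J$ you derive, $\Pr(J=j)=\binom{r+j-1}{r-1}\binom{n-r-j}{t-r}\big/\binom{n}{t}$, rewrites in Pochhammer form as $\binom{n-t}{j}\,r(r+1)\cdots(r+j-1)\,(t+1-r)(t+2-r)\cdots(n-r-j)\big/\big[(t+1)(t+2)\cdots n\big]$, with \emph{rising} factorials; the factor in (\ref{will-be-best}) is $\binom{n-t}{j}\,r^j(t+1-r)^{n-t-j}/(t+1)^{n-t}$ with \emph{ordinary} powers. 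These are different quantities whenever $n-t\ge 2$, so the claimed identification is false and your argument, as written, proves a different identity than the displayed one; it coincides with the statement only when $n-t\le 1$.

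The discrepancy is not a bookkeeping slip you can absorb: it marks exactly where your route and the paper's proof part ways. The paper obtains the plain-power factor by asserting that each of the $n-t$ future numbers falls \emph{independently}, with probability $\tfrac{1}{t+1}$, into each of the $t+1$ gaps, i.e.\ it takes $J\sim\mathrm{Binomial}\big(n-t,\tfrac{r}{t+1}\big)$. Your computation (uniformly random $t$-subset of final ranks, equivalently $\mathrm{Beta}(r,t+1-r)$ value followed by binomial counts) gives the P\'olya-urn/beta-binomial law, which is what exchangeability actually yields, since given only ranks the gap placements of future numbers are exchangeable but not independent. The two laws agree for a single future observation (e.g.\ $t=n-1$, the case the paper works out right after the theorem) but not in general: for $n=3$, $t=1$, $r=1$ the display (\ref{will-be-best}) evaluates to $3/8$, while symmetry forces the answer $1/3$, which is what your law of $J$ returns. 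So to prove the theorem as stated you would have to justify the independent-uniform-gaps model used in the paper, and your own (correct) derivation of $\Pr(J=j)$ shows that this cannot be reconciled with the displayed formula for $n-t\ge 2$; the proposal therefore does not close, and the mismatch it exposes deserves to be flagged rather than papered over in the final step.
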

 \begin{proof}
 Since $n-t$ additional numbers will be revealed, the rank $r$ of the number $y_r^{(t)}$ would increase by some $j$, where $0 \le j \le n-t$. Every number following $y_r^{(t)}$ will fall independently, with the same probability 
 $\frac{1}{t+1}$, into one of the intervals (0, $y_1^{(t)}$), ($y_1^{(t)}$, $y_2^{(t)}$), ..., ($y_t^{(t)}$, 1).  Every time a number falls into one of the first $r$ intervals, the rank of $y_r^{(t)} $ is increased by 1.
 Therefore, the probability that after revealing all $n$ numbers, the rank of $y_r^{(t)} $ will be $r + j$ is $\binom{n-t}{j} \frac{r^j(t+1-r)^{n-t-j}}{(t+1)^{n-t}}.$  Then, from Theorem \ref{bestprob}, 
 $$\text{Pr }(y_r^{(t)} \text{ will be the best } \vert \text{ its rank is }r+j) = \binom{n-1}{r-1+j}  \frac{1}{2^{n-1}} $$ and the formula (\ref{will-be-best}) follows.
 \end{proof}

From now on $Pr (y_r^{(t)} \text{ will be the best})$ will be, for short, denoted by $P_r^{(t)}$. Also, by $ \mathcal{A}_n^{(t)} $, we denote the optimal algorithm that stops only in rounds $t$, $t+1, ..., n-1$, or $n$ (it never stops before time $t$). We are ready to construct an optimal stopping algorithm $\mathcal{A}_n$ using backwards induction. Note that $\mathcal{A}_n = \mathcal{A}_n^{(1)}$.

$\mathcal{A}_n^{(n)}$ is an algorithm that stops only on the number that came in the last round, thus 
$\text{Pr}( \mathcal{A}_n^{(n)} \text{ succeeds)} = \frac{1}{n}$. Algorithm $\mathcal{A}_n^{(n-1)}$ stops only in rounds $n-1$ or $n$. Therefore, it stops on number $y_r^{(n-1)}$ in the 
$(n-1)^{\text{st}}$ round if $\text{P}_r^{(n-1)} \ge \frac{1}{n}$. Using the formula from Theorem \ref{generalprob} with $t = n-1$, we get the inequality
$ \frac{1}{2^{n-1}} \big[ \binom{n-1}{r-1} \frac{n-r}{n} + \binom{n-1}{r} \frac{r}{n} \big] \ge \frac{1}{n}$ which is equivalent to 
\begin{equation*}
 \binom{n-2}{r-1} \ge \frac{2^{n-2}}{n-1}.
\end{equation*}
Solving it for $r-1$ gives a symmetric interval from the $(n-2)^{\text{nd}}$ row of the Pascal triangle, namely $r-1 \in [z_1, \text{ } n-2-z_1]$ for some $z_1$, 
or, setting $r_1=z_1+1$, $r \in [r_1, \text{ } n-r_1]$.\\
Therefore, algorithm  $\mathcal{A}_n^{(n-1)}$ stops in round $n-1$ if and only if the rank of the number that comes in that round is from the stopping interval $[r_1, \text{ } n-r_1]$.
Of course,
$$\text{Pr}( \mathcal{A}_n^{(n-1)} \text{ succeeds)} = \sum\limits_{r=r_1}^{n-r_1} \frac{1}{n-1} \text{P}_r^{(n-1)} + \frac{2(r_1-1)}{n-1} \frac{1}{n},$$
where the two terms count the probabilities of winning if the
$(n-1)^{\text{st}}$ number has the rank from $[r_1, \text{ } n-r_1]$ or from outside of that interval, respectively.

In general, assume that for $k = t+1, t+2, \ldots, n$ we know the probabilities $\text{Pr}( \mathcal{A}_n^{(k)} \text{ succeeds)}$ and the stopping region in round $k$, the interval $[r_{n-k}, \text{ } k+1-r_{n-k}]$. Then the optimal algorithm $ \mathcal{A}_n^{(t)} $ stops on the number $y_r^{(t)}$ in round $t$ if and only if its rank $r$ satisfies the inequality
\begin{equation}\label{prob_rec}
\text{P}_r^{(t)} \ge \text{Pr}(\mathcal{A}_n^{(t+1)} \text{ succeeds}).
\end{equation}
If the inequality (\ref{prob_rec}) has a solution, then the solution set, the symmetric interval $[r_{n-t}, \text{ } t+1-r_{n-t}]$, is the stopping region for $ \mathcal{A}_n^{(t)} $ in round $t$ and 
$$
\text{Pr}(\mathcal{A}_n^{(t)} \text{ succeeds}) = \sum\limits_{r=r_{n-t}}^{t+1-r_{n-t}} \frac{1}{t}\text{P}_r^{(t)} + \frac{2(r_{n-t}-1)}{t}\text{Pr}(\mathcal{A}_n^{(t+1)} \text{ succeeds}).
$$
If there is no $r$ satisfying inequality (\ref{prob_rec}), then the algorithm $ \mathcal{A}_n^{(t)} $ never stops in round $t$ and $\text{Pr}(\mathcal{A}_n^{(t)} \text{ succeeds)} = \text{Pr}(\mathcal{A}_n^{(t+1)} \text{ succeeds)}$. Recall that the optimal algorithm for our decision problem is  $ \mathcal{A}_n = \mathcal{A}_n^{(1)}$. \\

\begin{algorithm}[H]
\DontPrintSemicolon
\caption{\textbf{$\mathcal{A}_n^{(t)}$}}
\vspace{5pt}
\KwData{$x_1, x_2, \ldots, x_n$ - numbers chosen uniformly at random from~$[0,1]$; $\text{Pr}( \mathcal{A}_n^{(k)} \text{ succeeds)}$ 
for $k = t+1, t+2, ..., n$} 
\KwResult{candidate for the number being the closest one to $1/2$ among $x_1, x_2, \ldots, x_n$}
\Begin{
  \If {$t == n$} {\Return $x_n$ \;}
	\For{$j = 1, 2, \ldots, t$}{
		reveal $x_j$ \;
	}
	$r$ := rank of element $x_t$ among ordered $x_1, \ldots, x_t$ ($x_t$ = $y_r^{(t)}$) \;
	\uIf {rank $r$ satisfies $\text{P}_r^{(t)} \ge \text{Pr}(\mathcal{A}_n^{(t+1)} \text{ succeeds})$}
			{\Return $x_t$ \;}
	\Else {\Return $\mathcal{A}_n^{(t+1)}$ \;}
}
\vspace{5pt}
\footnotesize{Recall that  $y_1^{(t)} < y_2^{(t)}  < ... < y_{t}^{(t)} $ are ordered $x_1, x_2, \ldots, x_{t}$, i.e., ordered numbers revealed till round $t$.}
\end{algorithm}
\vspace{5pt}
An implementation of algorithm $\mathcal{A}_n$ is straightforward and our next example illustrates how the optimal stopping strategy looks like for $n = 10$.\\

\noindent  Example:

\begin{table}[htp]
  \begin{center}
  
    \resizebox{8.5cm}{!} {
  	\begin{tabular}{|| c | c | c | c | c ||  }  \hline  \hline
	\rule[-8pt]{0pt}{22pt}  $t$   & $10-t$ & $r_{10-t}$ & \text{stopping interval} & $Pr(\mathcal{A}_{10}^{(t)} \text{ succeeds})$    \\  \hline \hline
	 \rule[-8pt]{0pt}{20pt} 1 & 9 &   &  &  \textbf{0.1893}    \\  \hline 
	 \rule[-8pt]{0pt}{20pt} 2 &  8  &   &    &  0.1893       \\  \hline 
	  \rule[-8pt]{0pt}{20pt} 3 & 7 & 2  & $\{2\}$ &  0.1893    \\  \hline 
	   \rule[-8pt]{0pt}{20pt} 4 & 6 &   &  &  0.1858    \\  \hline 
	     \rule[-8pt]{0pt}{20pt} 5 & 5 & 3  & $\{3\}$ &  0.1858    \\  \hline 
	       \rule[-8pt]{0pt}{20pt} 6 & 4 & 3  & $[3,4]$ &  0.1798    \\  \hline 
	           \rule[-8pt]{0pt}{20pt} 7 & 3 & 3  & $[3,5] $&  0.1701    \\  \hline 
	               \rule[-8pt]{0pt}{20pt} 8 & 2 & 4  & $[4,5] $&  0.1585    \\  \hline 
	                   \rule[-8pt]{0pt}{20pt} 9 & 1 & 4  & $[4,6] $&  0.1378    \\  \hline 	
	                       \rule[-8pt]{0pt}{20pt} 10 & 0 & 1  & $[1,10] $&  0.1   \\  \hline \hline 		
		\end{tabular}
		
	}
	\vspace{5pt}
	\caption {Stopping intervals at time $t$ and probabilities that the algorithm $\mathcal{A}_{n}^{(t)}$ succeeds for $n = 10$.}
  \end{center}
\end{table}

\noindent  The optimal algorithm $\mathcal{A}_{10}$ never stops in rounds 1, 2, and 4. It stops in round 3 only on the number of the middle rank. The stopping region is shaded in Figure 1. The number in bold in Table 1 is $Pr(\mathcal{A}_{10}^{(1)} \text{ succeeds})$ which is the performance of $\mathcal{A}_{10}$.

\begin{center}
\begin{tikzpicture}[y=7.5mm,x=8.66mm] \label{stop_reg}

  \colorlet{even}{cyan!40!black}
  \colorlet{odd}{red!60!white}


  \tikzset{
    box/.style={
      regular polygon,
      regular polygon sides=6,
      minimum size=10mm,
      inner sep=0mm,
      outer sep=0mm,
      text centered,
      font=\small\bfseries\sffamily,
      text=#1!50!black,
      draw=#1,
      line width=.25mm,
      rotate=30,
    },
    link/.style={black,  shorten >=2mm, shorten <=2mm, line width=1mm},
   } 
  \node[box=even] (p-0-0) at (0,0) {\rotatebox{-30}{1}};
  \foreach \row in {1,...,9} {
    \node[box=even] (p-\row-0) at (-\row/2,-\row) {\rotatebox{-30}{1}};
    \pgfmathsetmacro{\myvalue}{1};

    \foreach \col in {1,...,\row} {

      \pgfmathtruncatemacro{\myvalue}{\myvalue + 1};
      
     \global\let\myvalue=\myvalue
      \coordinate (pos) at (-\row/2+\col,-\row);
      \pgfmathtruncatemacro{\rest}{mod(\myvalue,2)}
       \node[box=even] (p-\row-\col) at (pos) {\rotatebox{-30}{\myvalue}}; 
      
    }
}
    
   \begin{pgfonlayer}{background}
    \foreach \i/\j in {2/1,4/2,5/2,5/3,6/2,6/3,6/4,7/3,7/4,8/3,8/4,8/5,9/0, 9/1, 9/2, 9/3,9/4,9/5,9/6,9/7,9/8,9/9}
        \node[box=even,fill=odd]  at (p-\i-\j) {};
  \end{pgfonlayer}
  
   \node[right=46mm of p-0-0.center, align=left]  {$ t=1$}; 
    \node[right=42mm of p-1-1.center, align=left]  {$ t=2$}; 
     \node[right=38mm of p-2-2.center, align=left]  {$ t=3$}; 
      \node[right=34mm of p-3-3.center, align=left]  {$ t=4$}; 
 \node[right=30mm of p-4-4.center, align=left]  {$ t=5$}; 
  \node[right=26mm of p-5-5.center, align=left]  {$ t=6$}; 
     \node[right=22mm of p-6-6.center, align=left]  {$ t=7$}; 
      \node[right=18mm of p-7-7.center, align=left]  {$ t=8$}; 
         \node[right=14mm of p-8-8.center, align=left]  {$ t=9$};
          \node[right=10mm of p-9-9.center, align=left]  {$ t=10$}; 
         
\end{tikzpicture}\\
{\bf Figure 1.} The stopping region for the optimal algorithm $\mathcal{A}_{10}$.
\end{center}

As can be seen from this example, the stopping region for our algorithm $\mathcal{A}_n$ is rather irregular and the recursive formulas used to calculate $\text{Pr}(\mathcal{A}_n \text{ succeeds})$ give little hope for finding a closed formula for that probability. Despite these shortcomings, in the next section we will provide  asymptotic performance of the optimal algorithm $ \mathcal{A}_n$.

\section{Asymptotics} \label{sec_asymptotics}
Throughout this section we use standard notation:
\begin{center}
$f(n)\sim g(n)$ if $\frac{f(n)}{g(n)} \xrightarrow [n \to\infty ]{}  1$ and $f(n) = o(g(n))$ if $\frac{f(n)}{g(n)} \xrightarrow [n \to\infty ]{}  0$.\\
\end{center}
Also, the binomial coefficient $\binom{n}{z}$ for $z$ not being a natural number is understood as
$$
\binom{n}{z} = \frac{\Gamma(n+1)}{\Gamma(z+1)\Gamma(n-z+1)},
$$
where $\Gamma(z)$ is the special function gamma.

The example from the previous section for $n = 10$ might be misleading, because for large values of $n$ the stopping region of the optimal algorithm is relatively small. Based on computer simulations done for $n$ going as far as 5000, we found that, for large values of $n$, the algorithm $ \mathcal{A}_n$ does not stop until it reaches the round $\lceil n - n^{2/3}\sqrt{\ln n} \rceil$ and in the round $n - 1$ stops only on the elements whose ranks are close to the middle.  In fact, for large values of $n$, $r_1 \sim \frac{n}{2} - \frac{1}{2}\sqrt{n \ln \frac{2n}{\pi}}$ and we will prove this result in Corollary \ref{C6}. We need several simple lemmas before.

\begin{lemma}\label{L3}
For large values of $n$, $ \binom{n}{n/2} \sim \frac{\sqrt{2} \cdot 2^n}{\sqrt{\pi n}}$.
\end{lemma}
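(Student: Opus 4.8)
The plan is to invoke Stirling's approximation. Since the paper interprets $\binom{n}{z}$ through the Gamma function, I would write $\binom{n}{n/2} = \Gamma(n+1)\big/\Gamma\!\left(\tfrac{n}{2}+1\right)^{2}$, which avoids any case distinction between even and odd $n$ (for even $n$ this is just the familiar central binomial coefficient $n!\big/\big(\tfrac{n}{2}!\big)^{2}$). The only external ingredient is Stirling's formula in the form $\Gamma(x+1) \sim \sqrt{2\pi x}\,(x/e)^{x}$ as $x \to \infty$.

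First I would substitute the asymptotics of the factors: $\Gamma(n+1) \sim \sqrt{2\pi n}\,(n/e)^{n}$ and $\Gamma\!\left(\tfrac{n}{2}+1\right) \sim \sqrt{\pi n}\,\big(n/(2e)\big)^{n/2}$, so that $\Gamma\!\left(\tfrac{n}{2}+1\right)^{2} \sim \pi n\,\big(n/(2e)\big)^{n}$. Dividing, the exponential parts collapse to $(n/e)^{n}\big/\big(n/(2e)\big)^{n} = 2^{n}$, while the polynomial prefactors give $\sqrt{2\pi n}\big/(\pi n) = \sqrt{2/(\pi n)}$. Multiplying the two contributions yields $\binom{n}{n/2} \sim \sqrt{2/(\pi n)}\cdot 2^{n} = \dfrac{\sqrt{2}\cdot 2^{n}}{\sqrt{\pi n}}$, which is the claim.

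The one point that deserves an explicit word is the legitimacy of replacing each factor by its asymptotic equivalent inside this quotient: this is valid because $\sim$ is multiplicative ($f\sim f'$ and $g\sim g'$ imply $fg \sim f'g'$ and $f/g \sim f'/g'$) and there is no cancellation of leading terms, so no sharper error estimate from Stirling's expansion is required. I do not anticipate any real obstacle here; the estimate is entirely routine, and as a sanity check, for $n = 2m$ it recovers the well-known relation $\binom{2m}{m}\sim 4^{m}/\sqrt{\pi m}$.
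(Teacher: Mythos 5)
Your argument is correct and is exactly the route the paper intends: the paper simply states that the lemma ``follows easily from Stirling's approximation,'' and your computation via $\Gamma(n+1)/\Gamma(\tfrac{n}{2}+1)^2$ carries that out in detail, with the Gamma-function form neatly covering non-integer $n/2$ as the paper's convention requires.
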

The formula follows easily from the Stirling's approximation. 

\begin{lemma}\label{L4}
If $s = s(n)$ and $w = w(n)$ are positive sequences such that\\  $s(n) \xrightarrow [n \to\infty ]{}  \infty$ and $w(n) = o(s(n))$, then $ \frac{\binom{2s}{s}}{\binom{2s}{s-w}} \sim e^{w^2/s}$.
\end{lemma}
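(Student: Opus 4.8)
The plan is to reduce the ratio of binomial coefficients to a product, take logarithms, and use the hypothesis $w = o(s)$ to control the resulting sum by a Taylor expansion. First I would write
$$
\frac{\binom{2s}{s}}{\binom{2s}{s-w}} = \frac{(s-w)!\,(s+w)!}{s!\,s!} = \prod_{i=1}^{w} \frac{s+i}{s-i+1} = \prod_{i=1}^{w} \frac{1 + i/s}{1 - (i-1)/s},
$$
valid when $w$ is a positive integer (the general real case is handled afterwards via the $\Gamma$-function convention introduced just above the lemma, or one simply notes the sequences of interest are integer-valued). Taking logarithms gives
$$
\log \frac{\binom{2s}{s}}{\binom{2s}{s-w}} = \sum_{i=1}^{w} \Bigl[ \log\bigl(1 + \tfrac{i}{s}\bigr) - \log\bigl(1 - \tfrac{i-1}{s}\bigr) \Bigr].
$$

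Next I would expand each logarithm. Since $i \le w = o(s)$, every argument $i/s$ and $(i-1)/s$ tends to $0$ uniformly in $i$, so $\log(1+x) = x + O(x^2)$ applies with a uniform error. This yields
$$
\log \frac{\binom{2s}{s}}{\binom{2s}{s-w}} = \sum_{i=1}^{w} \Bigl[ \tfrac{i}{s} + \tfrac{i-1}{s} + O\bigl(\tfrac{i^2}{s^2}\bigr) \Bigr] = \frac{1}{s}\sum_{i=1}^{w}(2i-1) + O\!\left(\frac{w^3}{s^2}\right) = \frac{w^2}{s} + O\!\left(\frac{w^3}{s^2}\right),
$$
using $\sum_{i=1}^w (2i-1) = w^2$. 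It then remains to check that the error term is negligible relative to the main term, i.e. that $w^3/s^2 = o(w^2/s)$; but this is exactly the statement $w/s = o(1)$, which is the hypothesis $w = o(s)$. Hence $\log \frac{\binom{2s}{s}}{\binom{2s}{s-w}} = \frac{w^2}{s}(1 + o(1))$.

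The one subtlety — and the step I would be most careful about — is that $w^2/s$ need not itself go to infinity (it could even tend to $0$), so I cannot simply exponentiate "$\log(\cdot) \sim w^2/s$" to get "$(\cdot) \sim e^{w^2/s}$"; those two asymptotic statements are not equivalent in general. The correct route is to exponentiate the additive estimate: $\frac{\binom{2s}{s}}{\binom{2s}{s-w}} = e^{w^2/s}\cdot e^{O(w^3/s^2)}$, and since $w^3/s^2 \to 0$ the factor $e^{O(w^3/s^2)} \to 1$, giving $\frac{\binom{2s}{s}}{\binom{2s}{s-w}} \big/ e^{w^2/s} \to 1$, which is precisely $\sim e^{w^2/s}$. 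So the main obstacle is purely this bookkeeping about which asymptotic relation is preserved under exponentiation; the combinatorial identity and the Taylor expansion are routine. If one wants to avoid the integrality assumption on $w$ entirely, one replaces the finite product by the ratio $\frac{\Gamma(s+w+1)\Gamma(s-w+1)}{\Gamma(s+1)^2}$ and applies the same expansion to $\log\Gamma$ via Stirling, but the estimate and the final exponentiation argument are identical.
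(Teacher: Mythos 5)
Your reduction to the product $\prod_{i=1}^{w}\frac{s+i}{s-i+1}$ is exactly the paper's starting point; from there the paper squeezes the product between $\big(1+\frac{w}{s}\big)^{w}$ and $\big(1+\frac{w}{s-w+1}\big)^{w}$ and argues that both bounds tend (in ratio) to $e^{w^{2}/s}$, whereas you take logarithms, Taylor-expand to get $\log(\cdot)=\frac{w^{2}}{s}+O(w^{3}/s^{2})$, and exponentiate the additive estimate. Your insistence on exponentiating the additive error rather than the relation $\log(\cdot)\sim w^{2}/s$ is exactly the right concern and you handle it correctly. The one step that does not follow from the stated hypothesis is your claim that $w^{3}/s^{2}\to 0$: the assumption $w=o(s)$ alone does not give this (take $w=s^{3/4}$, for which $w^{3}/s^{2}=s^{1/4}\to\infty$). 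However, this is not a defect relative to the paper, whose own proof silently needs the same condition: since $\big(1+\frac{w}{s}\big)^{w}=e^{\frac{w^{2}}{s}-\frac{w^{3}}{2s^{2}}(1+o(1))}$, the assertion that the lower bound approaches $e^{w^{2}/s}$ also presupposes $w^{3}/s^{2}\to 0$. In fact a symmetric expansion shows $\log\frac{\binom{2s}{s}}{\binom{2s}{s-w}}-\frac{w^{2}}{s}=-\frac{w^{2}}{2s^{2}}+\frac{w^{4}}{6s^{3}}(1+o(1))$, so the lemma as stated is really valid only for $w$ growing more slowly than about $s^{3/4}$. In every use made of the lemma in the paper ($w\asymp\sqrt{n\ln n}$ in Corollary \ref{C6}, $w_{n}=\lceil n^{1/3}\rceil$ in Corollary \ref{C9}) one has $w^{3}/s^{2}\to 0$, so both your argument and the paper's are valid where they are applied. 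In short, your proof matches the paper's in substance and rigor; just state the extra condition $w^{3}/s^{2}\to 0$ (equivalently $w=o(s^{2/3})$, which suffices for your error bound) explicitly at the point where you invoke it.
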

\begin{proof}
The ratio $ \frac{\binom{2s}{s}}{\binom{2s}{s-w}}$ simplifies to
\begin{center}
$\frac{(s+1)(s+2)...(s+w-1)(s+w)}{(s-w+1)(s-w+2)...(s -1)s}=(1+\frac{w}{s-w+1})(1+\frac{w}{s-w+2})...(1+\frac{w}{s})$.
\end{center}
Therefore, $(1+\frac{w}{s})^w \le \frac{\binom{2s}{s}}{\binom{2s}{s-w}} \le (1+\frac{w}{s-w+1})^w$ or, equivalently, 
\begin{center}
$[(1+\frac{1}{s/w})^{s/w}]^{w^2/s} \le \frac{\binom{2s}{s}}{\binom{2s}{s-w}} \le [(1+\frac{1}{(s-w+1)/w})^{\frac{s-w+1}{w}}]^{\frac{w^2}{s-w+1}}$.
\end{center}
Since both lower and upper bounds approach $e^{w^2/s}$, the result follows.
\end{proof}
From Lemma \ref{L3} and Lemma \ref{L4}, we immediately get the following result.

\begin{corollary}\label{C5}
If $s = s(n) \xrightarrow [n \to\infty ]{}  \infty$ and $w(n) = o(s(n))$, then
\begin{align*}
\binom{s}{\frac{s}{2}-w} \sim \frac{\sqrt{2} \cdot 2^s}{\sqrt{\pi s} \cdot e^{\frac{2w^2}{s}}}.
\end{align*}
\end{corollary}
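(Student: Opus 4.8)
The plan is to simply chain Lemma \ref{L3} and Lemma \ref{L4}, exactly as the phrase preceding the statement suggests. First I would set $S = S(n) := s(n)/2$ and note that $S \to \infty$ and $w = o(S)$ still hold, so Lemma \ref{L4} applies verbatim with its ``$s$'' replaced by $S$. Since $2S = s$, this gives
$$
\frac{\binom{s}{s/2}}{\binom{s}{s/2-w}} \;=\; \frac{\binom{2S}{S}}{\binom{2S}{S-w}} \;\sim\; e^{w^2/S} \;=\; e^{2w^2/s},
$$
and hence, using that $\sim$ is preserved under reciprocals and products of finitely many factors, $\binom{s}{s/2-w} \sim \binom{s}{s/2}\, e^{-2w^2/s}$.

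Next I would invoke Lemma \ref{L3} with $n$ replaced by $s$ to obtain $\binom{s}{s/2} \sim \frac{\sqrt{2}\cdot 2^{s}}{\sqrt{\pi s}}$. Multiplying the two equivalences yields
$$
\binom{s}{\tfrac{s}{2}-w} \;\sim\; \frac{\sqrt{2}\cdot 2^{s}}{\sqrt{\pi s}\cdot e^{2w^2/s}},
$$
which is the assertion of the corollary.

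There is essentially no obstacle here; the only point requiring a word of care is that $s(n)$ and $w(n)$ need not be integers, so the binomial coefficients must be read through the $\Gamma$-function convention fixed at the start of the section. One checks that the telescoping product in the proof of Lemma \ref{L4} goes through in that generality (or, alternatively, one first argues for integer arguments and then appeals to continuity of $\Gamma$), after which the corollary follows immediately from the two preceding lemmas.
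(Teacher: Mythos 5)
Your proposal is correct and follows exactly the route the paper intends: the paper gives no separate proof of Corollary \ref{C5}, stating it follows immediately by combining Lemma \ref{L3} (applied with $n$ replaced by $s$) and Lemma \ref{L4} (applied with its $s$ equal to $s/2$), which is precisely your chaining argument. Your extra remark about reading the binomial coefficients through the $\Gamma$-function convention for non-integer arguments is a sensible point of care but does not change the substance.
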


\begin{corollary}\label{C6}
The asymptotic solution of the inequality
	$\binom{n-2}{r - 1} \ge \frac{2^{n-2}}{n - 1}$ is $r \ge \frac{n}{2} - \frac{1}{2}\sqrt{n \ln \frac{2n}{\pi}}$ \rm{(}when considering only $r \le \frac{n}{2}$\rm{)}.
\end{corollary}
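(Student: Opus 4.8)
The plan is to rewrite $\binom{n-2}{r-1}$ in the form handled by Corollary \ref{C5} and then solve the resulting elementary inequality for $r$. First I would set $s=n-2$ and introduce the displacement from the center $w=\frac{n}{2}-r$, so that $r-1=\frac{s}{2}-w$ (using $\frac{s}{2}=\frac{n-2}{2}=\frac{n}{2}-1$). Since we only care about $r\le\frac{n}{2}$ we have $w\ge 0$, and, anticipating that the threshold will be of order $\sqrt{n\ln n}$, it suffices to treat $w=o(n)$: if $w$ were of order $n$ the left-hand side would be exponentially smaller than $2^{n-2}$ and the inequality would fail. With $s\to\infty$ and $w=o(s)$, Corollary \ref{C5} applies and gives
$$
\binom{n-2}{r-1}=\binom{s}{\tfrac{s}{2}-w}\sim\frac{\sqrt{2}\cdot 2^{s}}{\sqrt{\pi s}\cdot e^{2w^{2}/s}}=\frac{\sqrt{2}\cdot 2^{n-2}}{\sqrt{\pi(n-2)}\cdot e^{2w^{2}/(n-2)}}.
$$

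Next I would substitute this into $\binom{n-2}{r-1}\ge\frac{2^{n-2}}{n-1}$ and cancel $2^{n-2}$, so that the inequality becomes asymptotically equivalent to
$$
e^{2w^{2}/(n-2)}\le\frac{\sqrt{2}\,(n-1)}{\sqrt{\pi(n-2)}}\sim\sqrt{\frac{2n}{\pi}}.
$$
Taking logarithms this reads $\frac{2w^{2}}{n-2}\le\frac12\ln\frac{2n}{\pi}$, i.e. $w^{2}\le\frac{n-2}{4}\ln\frac{2n}{\pi}\sim\frac{n}{4}\ln\frac{2n}{\pi}$, hence $w\le\frac12\sqrt{n\ln\frac{2n}{\pi}}$. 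Recalling $w=\frac{n}{2}-r$, this is precisely $r\ge\frac{n}{2}-\frac12\sqrt{n\ln\frac{2n}{\pi}}$, which is the claimed asymptotic solution.

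The one point needing care — and the main obstacle — is justifying that this chain of "$\sim$" really pins down the separating value $r_1(n)$ with the stated precision, rather than giving only a heuristic. Concretely, one should note that $\binom{n-2}{r-1}$ is log-unimodal and, for $r\le n/2$, decreasing in $w$, and that a perturbation $\Delta w$ of $w$ changes the exponent $2w^{2}/(n-2)$ by only $\Theta\!\big(w\,\Delta w/n\big)=\Theta\!\big(\sqrt{\ln n/n}\;\Delta w\big)$ near the critical scale $w$ of order $\sqrt{n\ln n}$; hence any relative perturbation of $w$ that is $o(1)$ leaves the truth value of the inequality unchanged in the limit. Equivalently, replacing $\ln\frac{2n}{\pi}$ by $\ln(2n)$ or by $\ln n$ under the square root does not affect the leading term, so $w=\frac12\sqrt{n\ln\frac{2n}{\pi}}$ is the asymptotically correct separating point and $r_1(n)\sim\frac{n}{2}-\frac12\sqrt{n\ln\frac{2n}{\pi}}$. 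Obtaining $r_1(n)$ to within $o\!\big(\sqrt{n\ln n}\big)$ rather than up to a $1+o(1)$ factor would require keeping the next-order terms in Corollary \ref{C5}, but that sharper statement is not needed here.
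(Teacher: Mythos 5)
Your proposal is correct and follows essentially the same route as the paper's own proof: the substitution $s=n-2$, $r-1=\frac{s}{2}-w$, an application of Corollary \ref{C5}, and then solving $e^{2w^2/s}\le \frac{\sqrt{2}(s+1)}{\sqrt{\pi s}}$ for $w$ to obtain $w\le\frac{1}{2}\sqrt{n\ln\frac{2n}{\pi}}$, hence $r\ge\frac{n}{2}-\frac{1}{2}\sqrt{n\ln\frac{2n}{\pi}}$. Your added remarks verifying $w=o(s)$ (so that Corollary \ref{C5} applies) and discussing the stability of the threshold are sensible extras that the paper leaves implicit, but they do not change the argument.
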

\begin{proof}
Let $s = n - 2$ and $r - 1 = \frac{s}{2}- w$.  We want to find $w$ for which\\
$\binom{s}{\frac{s}{2}-w} \ge \frac{2^s}{s+1}$. From Corollary \ref{C5}, we want to solve the inequality\\
 $$\frac{\sqrt{2} \cdot 2^s}{\sqrt{\pi s} \cdot e^{\frac{2w^2}{s}}} \ge \frac{2^s}{s+1} \text{  which is equivalent to   }e^{\frac{2w^2}{s}} \le \frac{\sqrt{2}(s+1)}{\sqrt{\pi s}} .$$
 We get $\frac{2w^2}{s} \le \ln \frac{\sqrt{2}(s+1)}{\sqrt{\pi s}} $ or, equivalently, $w \le \sqrt{\frac{s}{2}} \cdot (\ln \frac{\sqrt{2}(s+1)}{\sqrt{\pi s}})^{1/2} \sim \frac{1}{2}\sqrt{s \ln \frac{2s}{\pi}}$. \\
 Therefore,  $r = \frac{s}{2} + 1 - w \ge \frac{n}{2} - \frac{1}{2}\sqrt{n \ln \frac{2n}{\pi}}.$
\end{proof}

The rest of this section covers calculating the exact asymptotics of the probability that $\mathcal{A}_n$ succeeds. First, we define algorithm $\mathcal{A}(h_n, w_n)$ that is not optimal, but has more regular stopping region than the optimal algorithm $ \mathcal{A}_n$. It will be helpful in finding the reasonable lower bound for the performance of $\mathcal{A}_n$.

The algorithm $\mathcal{A}(h_n, w_n)$ takes natural parameters $h_n$ and $w_n$ describing its stopping region. It never stops before time $h_n$. For $t \ge h_n$ it stops on $x_t$ if and only if $x_t$ falls between $y_{\lceil \frac{t}{2}\rceil - w_n}^{(t-1)}$ and $y_{\lfloor \frac{t}{2} \rfloor +w_n}^{(t-1)}$, where $y_1^{(t-1)} < y_2^{(t-1)}  < ... < y_{t-1}^{(t-1)} $ are ordered numbers revealed till time $t-1$. If it never happens, $\mathcal{A}(h_n, w_n)$ stops at $x_n$.

\begin{algorithm}[H]
\DontPrintSemicolon
\caption{\textbf{$\mathcal{A}(h_n, w_n)$}}
\vspace{5pt}
\KwData{$x_1, x_2, \ldots, x_n$ - numbers chosen uniformly at random from~$[0,1]$; $h_n$, $w_n$ - parameters describing the stopping region} 
\KwResult{candidate for the number being the closest one to $1/2$ among $x_1, x_2, \ldots, x_n$}
\Begin{
	\For{$t = 1, 2, \ldots, h_n-1$}{
		reveal $x_t$\;
	}
	\For{$t = h_n, h_n+1, \ldots, n-1$}{
		reveal $x_t$\;
		\If { $x_t \in [y_{\lceil \frac{t}{2}\rceil-w_n}^{(t-1)}, y_{\lfloor \frac{t}{2} \rfloor+w_n }^{(t-1)}]$ }
		{\Return $x_t$\;}
	}
	\Return $x_n$\;
}
\vspace{5pt}
\footnotesize{Recall that  $y_1^{(t-1)} < y_2^{(t-1)}  < ... < y_{t-1}^{(t-1)} $ are ordered $x_1, x_2, \ldots, x_{t-1}$, i.e., ordered numbers revealed till round $t-1$.}
\end{algorithm}


\vspace{10pt}
Figure 2 displays the rectangular stopping region for the algorithm $ \mathcal{A}(h_n, w_n)$. Note that $n-h_n+1$ and $2 w_n$ can be interpreted as height and width of this stopping region, respectively.

\begin{figure}
\vspace*{80pt}
\begin{center}
\begin{picture}(270,120)


\put(0,0){\line(3,5){120}}
\put(240,0){\line(-3,5){120}}
\put(0,0){\line(1,0){240}}
\put(40,70){\line(1,0){4}}
\put(16,30){\line(1,0){4}}
\put(4,10){\line(1,0){4}}

\multiput(48,80)(5,0){29}{\line(2,0){2}}
\linethickness{.8mm}{\color{red}
{\multiput(90,70)(0,-10){3}{\line(1,0){60}}
\multiput(90,30)(0,-20){2}{\line(1,0){60}}}
\put(0,0){\line(1,0){240}}}							
\multiput(88,67)(60,0){2}{$\bullet$}
\multiput(88,57)(60,0){2}{$\bullet$}
\multiput(88,47)(60,0){2}{$\bullet$}
\multiput(88,27)(60,0){2}{$\bullet$}
\multiput(88,67)(60,0){2}{$\bullet$}
\multiput(88,7)(60,0){2}{$\bullet$}
\multiput(-3,-3)(240,0){2}{$\bullet$}   
\multiput(120,43)(0,-3){3}{\circle{1}}
\multiput(120,22)(0,-3){3}{\circle{1}}

\put(78,110){$\text{DO NOT STOP}$}
\put(13,80){$\small{h_n-1}$}
\put(23,66){$\small{h_n}$}
\put(9,28){$\small{t}$}
\put(-25,8){$\small{n-1}$}
\put(-15,-2){$\small{n}$}

\put(108,-12){width}
\put(106,-22){$\sim 2w_n$}
\put(90, -17){$\leftarrow$}
\put(141, -17){$\rightarrow$}

\end{picture}
\vspace*{1.5pc}

 {\bf Figure 2.} The stopping region for the algorithm $\mathcal{A}(h_n,w_n)$.
\end{center}
\end{figure}

The following technical lemma will be used in the next theorem to estimate the performance of $ \mathcal{A}(h_n, w_n)$.

\begin{lemma} \label{lemma_dec}
Let $a_s = \frac{1}{2^s} \binom{s}{\frac{s}{2}-w}$. For $ m > 2 w^2-1$, the sequences $\{a_{2m}\}_{m \geq 0}$ and $\{a_{2m+1}\}_{m \geq 0}$ are decreasing.
\end{lemma}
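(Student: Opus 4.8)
The plan is to control consecutive terms of the same parity by examining the ratio $a_{s+2}/a_s$ and showing it is $<1$ exactly when $\tfrac{s}{2}>2w^2-1$.

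\textbf{Step 1: compute the ratio.} Using the definition of $a_s$ together with the functional equation $\Gamma(x+1)=x\Gamma(x)$ (which legitimizes the manipulation even when $s$ is odd, so that $\tfrac{s}{2}-w$ is not a natural number and the $\Gamma$-convention adopted at the start of this section is in force), I would write
$$
\frac{a_{s+2}}{a_s}=\frac14\cdot\frac{\binom{s+2}{\frac{s}{2}+1-w}}{\binom{s}{\frac{s}{2}-w}}
=\frac14\cdot\frac{(s+1)(s+2)}{\left(\tfrac{s}{2}+1-w\right)\left(\tfrac{s}{2}+1+w\right)}
=\frac{(s+1)(s+2)}{(s+2)^2-4w^2},
$$
where the middle equality is the standard identity $\binom{s+2}{k+1}/\binom{s}{k}=\frac{(s+1)(s+2)}{(k+1)(s+1-k)}$ applied with $k=\tfrac{s}{2}-w$.

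\textbf{Step 2: decide when it is less than $1$.} Since $\tfrac{s}{2}-w\ge 0$ and the denominator $(s+2)^2-4w^2$ is positive throughout the range where $m>2w^2-1$, the inequality $a_{s+2}<a_s$ is equivalent to $(s+1)(s+2)<(s+2)^2-4w^2$, which after cancellation is just $s+2-4w^2>0$, i.e. $\tfrac{s}{2}>2w^2-1$. (For bookkeeping one records that the ratio equals $1$ at $s=4w^2-2$ and exceeds $1$ below that value, which shows the threshold is sharp.)

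\textbf{Step 3: specialize to each parity.} Taking $s=2m$ gives $a_{2(m+1)}<a_{2m}$ precisely for $m>2w^2-1$, so $\{a_{2m}\}$ is decreasing on that range. Taking $s=2m+1$ gives $a_{2(m+1)+1}<a_{2m+1}$ precisely for $m+\tfrac12>2w^2-1$, i.e. $m>2w^2-\tfrac32$, which is implied by $m>2w^2-1$; hence $\{a_{2m+1}\}$ is decreasing on the stated range as well. I do not expect a genuine obstacle here, as the computation is elementary once the ratio is in closed form; the only points requiring a little care are the reduction of the quotient of binomial coefficients to a rational function when $s$ is odd (handled by the $\Gamma$-function convention) and checking that $\tfrac{s}{2}-w\ge 0$ and $(s+2)^2-4w^2>0$ in the relevant range so that all quantities are positive and the inequalities may be cross-multiplied.
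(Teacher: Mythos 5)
Your proof is correct and follows essentially the same route as the paper: both compute the ratio of consecutive same-parity terms via the functional equation $\Gamma(x+1)=x\Gamma(x)$ and check when that ratio drops below $1$, arriving at the thresholds $m>2w^2-1$ (even indices) and $m>2w^2-\tfrac{3}{2}$ (odd indices). Your only deviation is cosmetic — you treat both parities at once through the uniform ratio $a_{s+2}/a_s=\frac{(s+1)(s+2)}{(s+2)^2-4w^2}$, whereas the paper writes out the odd case and says ``similarly'' for the even one.
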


\begin{proof}
Consider the sequence $\{a_{2m+1}\}_{m \geq 0}$. Using the fact $\Gamma(z+1) = z \Gamma(z)$, we get
\begin{align*} 
\begin{split}
\frac{a_{2m+3}}{a_{2m+1}} & = \frac{2^{2m+1}}{2^{2m+3}} \cdot \frac{\binom{2m+3}{\frac{2m+3}{2}-w}}{\binom{2m+1}{\frac{2m+1}{2}-w}} \\
& = \frac{1}{4} \cdot \frac{\Gamma(2m+4)}{\Gamma(m-w+\frac{5}{2})\Gamma(m+w+\frac{5}{2})} \cdot \frac{\Gamma(m-w+\frac{3}{2}) \Gamma(m+w+\frac{3}{2})}{\Gamma(2m+2)} \\
& = \frac{1}{4} \cdot \frac{(2m+3)(2m+2)}{(m-w+\frac{3}{2})(m+w+\frac{3}{2})} = \frac{2m^2+5m+3}{2m^2+6m-2w^2+\frac{9}{2}}.
\end{split}
\end{align*}
Thus if only $m>2w^2-\frac{3}{2}$, the ratio $\frac{a_{2m+3}}{a_{2m+1}}$ is smaller than $1$ and the sequence $\{a_{2m+1}\}_{m \geq 0}$ is decreasing.

Similarly, we get that for $m>2w^2-1$, the sequence $\{a_{2m}\}_{m \geq 0}$ is decreasing.
\end{proof}

\begin{corollary} \label{cor_dec}
Let $s$, $w$, and $n$ be natural numbers such that $n > s$ and $s > 4 w^2$. Then
$$
\frac{1}{2^{s-1}}\binom{s-1}{\lceil \frac{s}{2} \rceil-w} \geq \frac{1}{2^{n-1}}\binom{n-1}{\frac{n-1}{2}-w}.
$$
\end{corollary}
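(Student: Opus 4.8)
The plan is to trap the left-hand side between the decreasing subsequences of Lemma~\ref{lemma_dec}. Write $m := n-1$ and observe that the right-hand side is exactly $a_m = \frac{1}{2^m}\binom{m}{m/2-w}$ in the notation of that lemma; also we may assume $w \ge 1$, since for $w=0$ the stopping region of $\mathcal{A}(h_n,w_n)$ is degenerate and the corollary is not needed. I will use three standard facts about the $\Gamma$-interpolated binomial $\binom{m}{z} = \Gamma(m+1)/(\Gamma(z+1)\Gamma(m-z+1))$, viewed as a function of real $z$: (i) it is symmetric about $z=m/2$ and strictly log-concave, hence unimodal with peak at $z=m/2$, because $\frac{d^2}{dz^2}\log\binom{m}{z} = -\psi'(z+1)-\psi'(m-z+1) < 0$, where $\psi$ is the digamma function; (ii) Pascal's identity $\binom{m}{z} = \binom{m-1}{z}+\binom{m-1}{z-1}$ (immediate from $\Gamma(t+1)=t\,\Gamma(t)$); and (iii) it agrees with the ordinary binomial coefficient at integer $z$. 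Since $s > 4w^2 \ge 4$, all arguments appearing below lie safely in the range where these facts apply.

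Put $c := \lceil s/2\rceil - w$, an integer. A quick case check on the parity of $s$ shows $c \le (s-1)/2$ and that $c$ is at distance $w-\tfrac12$ (if $s$ even) or $w-1$ (if $s$ odd) from the centre $(s-1)/2$ of row $s-1$; in particular $c$ is at least as close to $(s-1)/2$ as $(s-1)/2-w$ is, and on the same side. Hence, using (i) and (iii),
\[
\frac{1}{2^{s-1}}\binom{s-1}{c} \;\ge\; \frac{1}{2^{s-1}}\binom{s-1}{(s-1)/2-w} \;=\; a_{s-1}. \qquad \mathrm{(A)}
\]
For a companion bound, apply (ii): $\binom{s}{s/2-w} = \binom{s-1}{s/2-w} + \binom{s-1}{s/2-w-1}$. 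The entries $s/2-w$ and $s/2-w-1$ lie to the left of $(s-1)/2$ at distances $w-\tfrac12$ and $w+\tfrac12$, so by (i) and (iii) each of the two summands is at most $\binom{s-1}{c}$; therefore $\binom{s}{s/2-w} \le 2\binom{s-1}{c}$, that is,
\[
\frac{1}{2^{s-1}}\binom{s-1}{c} \;\ge\; \frac{1}{2^{s}}\binom{s}{s/2-w} \;=\; a_{s}. \qquad \mathrm{(B)}
\]

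It remains to combine (A) and (B) with Lemma~\ref{lemma_dec}. If $n\equiv s \pmod 2$, then $m=n-1$ and $s-1$ have the same parity and $m \ge s+1$; the hypothesis $s>4w^2$ places the index $s-1$ strictly above the threshold of Lemma~\ref{lemma_dec} for its parity class, so $a_m \le a_{s-1}$, and (A) gives the claim. If $n\not\equiv s \pmod 2$, then $m$ and $s$ have the same parity and $m \ge s$; again $s>4w^2$ pushes the index $s$ past the relevant threshold, so $a_m \le a_{s}$ (with equality if $m=s$), and (B) gives the claim.

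The only genuine work is the bookkeeping inside (A)--(B) — keeping each entry on the correct side of the centre while switching between integer and $\Gamma$-interpolated binomials — together with verifying, in each of the four parity subcases, that $s>4w^2$ really clears the two thresholds ($k>2w^2-1$ for even indices $2k$, and $k>2w^2-\tfrac32$ for odd indices $2k+1$) appearing in the proof of Lemma~\ref{lemma_dec}; both are elementary arithmetic, and I expect the parity bookkeeping to be the fiddliest part rather than any conceptual difficulty.
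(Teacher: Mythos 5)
Your proof is correct and takes essentially the same route as the paper: reduce the left-hand side to $a_{s-1}$ or $a_s$ via Pascal's rule and the unimodality of the $\Gamma$-interpolated binomial, then apply the monotonicity from Lemma \ref{lemma_dec} along the matching parity class (the paper organizes this as three parity cases and leaves the unimodality and $\Gamma$-Pascal facts implicit, while you make them explicit). The only quibble is your dismissal of $w=0$, which the statement formally allows but which also follows immediately from Lemma \ref{lemma_dec}.
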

\begin{proof}
Whenever $n$ and $s$ are both odd or both even, by Lemma \ref{lemma_dec} we get immediately
\begin{align*}
\frac{1}{2^{s-1}}\binom{s-1}{\lceil \frac{s}{2} \rceil-w} & \geq \frac{1}{2^{s-1}}\binom{s-1}{\frac{s-1}{2}-w} \geq \frac{1}{2^{n-1}}\binom{n-1}{\frac{n-1}{2}-w}.
\end{align*}
Now, assume that $s$ is even and $n$ is odd. Then, by Lemma \ref{lemma_dec}, since $n>s$
$$
\frac{1}{2^{s-1}} \binom{s-1}{\lceil \frac{s}{2} \rceil -w} = \frac{1}{2^{s-1}} \binom{s-1}{\frac{s}{2}-w} \geq \frac{1}{2^{s-1}} \cdot \frac{1}{2} \cdot \binom{s}{\frac{s}{2}-w} \geq \frac{1}{2^{n-1}}\binom{n-1}{\frac{n-1}{2}-w}.
$$
Finally, assume that $s$ is odd and $n$ is even. Then, analogously
\begin{align*}
\begin{split}
\frac{1}{2^{s-1}} \binom{s-1}{\lceil \frac{s}{2} \rceil -w} & = \frac{1}{2^{s-1}} \binom{s-1}{\frac{s+1}{2} -w} \geq \frac{1}{2^{s-1}} \cdot \frac{1}{2} \cdot \binom{s}{\frac{s+1}{2} - w} \\
& \geq \frac{1}{2^{s}} \cdot \binom{s}{\frac{s}{2}-w} \geq \frac{1}{2^{n-1}}\binom{n-1}{\frac{n-1}{2}-w}.
\end{split}
\end{align*}
\end{proof}

\begin{theorem}\label{th_main}
For sequences $h_n$ and $ w_n$ of natural numbers such that $h_n \leq n$ and $4w_n^2<n$, we have
$$\text{Pr}( \mathcal{A}(h_n, w_n)  \text{ succeeds)} \ge v(h_n,w_n),$$
where $v(h_n,w_n)$ is a function such that for $w_n \xrightarrow [n \to\infty ]{}  \infty$
$$ v(h_n, w_n) \sim \frac{h_n}{n2^{n-1 }} \binom{n-1}{\frac{n-1}{2}-w_n}\cdot \bigg(1 -  \Big(1 - \frac{2w_n}{h_n}\Big)^{n-h_n}\bigg).$$
\end{theorem}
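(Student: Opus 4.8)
The plan is to bound $\Pr(\mathcal A(h_n,w_n)\text{ succeeds})$ from below by the probability of a family of pairwise disjoint ``ideal'' events, one for each round at which the algorithm could stop. For $h_n\le t\le n-1$, let $G_t$ be the event that $x_t$ is the globally optimal element (the one closest to $1/2$), that its rank $\rho_t$ among $x_1,\dots,x_t$ lies in the stopping window $W_t:=\{\lceil t/2\rceil-w_n+1,\dots,\lfloor t/2\rfloor+w_n\}$ (equivalently, $x_t\in[y_{\lceil t/2\rceil-w_n}^{(t-1)},y_{\lfloor t/2\rfloor+w_n}^{(t-1)}]$, so that $|W_t|\in\{2w_n-1,2w_n\}$), and that $\rho_s\notin W_s$ for every $s$ with $h_n\le s\le t-1$. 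A short induction on rounds shows that on $G_t$ the algorithm passes rounds $1,\dots,t-1$ without stopping and then stops on the optimal $x_t$; since the optimal element is a.s.\ unique, the $G_t$ are disjoint, so it suffices to estimate $\sum_{t=h_n}^{n-1}\Pr(G_t)$ from below.

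First I would establish the crucial distributional fact: conditioned on the event $T_t:=\{x_t\text{ is optimal}\}$ (which has probability $1/n$ by symmetry), the rank $\rho_t$ satisfies $\rho_t-1\sim\mathrm{Bin}(t-1,\tfrac12)$, while the relative ranks $(\rho_1,\dots,\rho_{t-1})$ retain their unconditional uniform/independent law and are independent of $\rho_t$. Indeed, conditioning further on $x_t=v$ with $d:=|v-\tfrac12|$, the numbers $x_1,\dots,x_{t-1}$ become i.i.d.\ uniform on $[0,\tfrac12-d)\cup(\tfrac12+d,1]$, a set symmetric about $\tfrac12$ of length $1-2d$ whose portion below $v$ has length $\tfrac12-d$; hence each $x_i$ lies below $v$ with probability exactly $\tfrac12$. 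Moreover, given the number of $x_i$'s falling below $v$, the relative order of $x_1,\dots,x_{t-1}$ is still uniform, so $\rho_t-1$ is independent of $(\rho_1,\dots,\rho_{t-1})$; integrating out $v$ gives the claim. Combining this with the (unconditional) facts that $\Pr(\rho_s\notin W_s)=1-|W_s|/s$ and that the $\rho_s$ are independent across $s$, I get
\[
\Pr(G_t)=\frac1n\cdot\frac{1}{2^{t-1}}\sum_{r\in W_t}\binom{t-1}{r-1}\cdot\prod_{s=h_n}^{t-1}\Bigl(1-\tfrac{|W_s|}{s}\Bigr).
\]

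Then I would bound the three factors from below. Since $|W_s|\le 2w_n$, the product is at least $\prod_{s=h_n}^{t-1}(1-2w_n/s)\ge(1-2w_n/h_n)^{t-h_n}$. In the binomial sum, the minimal term over $r\in W_t$ is $\binom{t-1}{\lceil t/2\rceil-w_n}$ (the two endpoints of $W_t$ are equidistant from the centre $(t-1)/2$, up to parity), so the sum is at least $(2w_n-1)\binom{t-1}{\lceil t/2\rceil-w_n}$, and Corollary~\ref{cor_dec} applied with $s=t$ (legitimate since $n>t$ and, in the regime of interest, $t\ge h_n>4w_n^2$; terms with $t\le 4w_n^2$ may simply be discarded) gives $2^{-(t-1)}\binom{t-1}{\lceil t/2\rceil-w_n}\ge 2^{-(n-1)}\binom{n-1}{(n-1)/2-w_n}$. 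Summing the geometric series $\sum_{t=h_n}^{n-1}(1-2w_n/h_n)^{t-h_n}=\tfrac{h_n}{2w_n}\bigl(1-(1-2w_n/h_n)^{n-h_n}\bigr)$ then yields
\[
\Pr(\mathcal A(h_n,w_n)\text{ succeeds})\ \ge\ \frac{(2w_n-1)h_n}{2w_n\,n\,2^{\,n-1}}\binom{n-1}{\tfrac{n-1}{2}-w_n}\Bigl(1-\bigl(1-\tfrac{2w_n}{h_n}\bigr)^{n-h_n}\Bigr)=:v(h_n,w_n),
\]
and since $(2w_n-1)/(2w_n)\to1$ as $w_n\to\infty$, this $v(h_n,w_n)$ is asymptotically the expression in the statement.

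I expect the conditional-independence step of the second paragraph to be the main obstacle: one must argue both that conditioning on ``$x_t$ is optimal'' leaves the joint law of the earlier relative ranks undisturbed and turns $\rho_t-1$ into a symmetric binomial, and that these two remain independent — all of which rests on the symmetry about $\tfrac12$ of the conditional law of the non-optimal numbers. Everything afterwards (the entrywise binomial lower bound via Corollary~\ref{cor_dec}, the geometric summation, and the asymptotic cleanup) is routine.
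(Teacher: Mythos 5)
Your proposal is correct and follows essentially the same route as the paper's proof: you decompose according to the arrival time of the best element (your $G_t$ are exactly the paper's conditioning on $B_s$ together with the no-earlier-stop and in-window events), bound the binomial window sum by $(2w_n-1)\binom{t-1}{\lceil t/2\rceil - w_n}$, invoke Corollary~\ref{cor_dec}, and sum the same geometric series, ending with a $v(h_n,w_n)$ that differs from the paper's only by factors tending to $1$. The only substantive difference is that you spell out the conditional-independence and $\mathrm{Bin}(t-1,\tfrac12)$ rank distribution given ``$x_t$ is best'' (and note the $t>4w_n^2$ requirement for Corollary~\ref{cor_dec}), steps the paper asserts more briefly.
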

\begin{proof}
For $s \in \{1,2,\ldots, n\}$ let $B_s$ be the event that the best element arrives at time $s$; it means that $x_s$ is closest to $\frac{1}{2}$.  Of course, $\text{Pr}(B_s) = \frac{1}{n}$.  Then
\begin{align*}
\begin{split}
\text{Pr}( \mathcal{A}(h_n, w_n) \text{ succeeds)} &= \sum\limits_{s=1}^{n}\text{Pr}( \mathcal{A}(h_n, w_n) \text{ succeeds }\big| B_s)\cdot  \text{Pr}(B_s) \\
& = \frac{1}{n}\sum\limits_{s=h_n}^{n}\text{Pr}( \mathcal{A}(h_n, w_n) \text{ succeeds }\big| B_s),
\end{split}
\end{align*}
because our algorithm never stops before time $h_n$. 

In order for $ \mathcal{A}(h_n, w_n)$ to succeed, the numbers $x_{h_n}, x_{h_n+1}, ..., x_{s-1}$ must fall outside the stopping intervals and the number $x_s$ must fall into the interval 
$\big[y_{\lceil\frac{s}{2}\rceil-w_n}^{(s-1)}, y_{\lfloor\frac{s}{2}\rfloor+w_n}^{(s-1)}\big]$.  These events are independent and
\begin{center}
$\text{Pr}\big( x_t \text{ falls outside } \big[y_{\lceil\frac{t}{2}\rceil-w_n}^{(t-1)}, y_{\lfloor\frac{t}{2}\rfloor+w_n }^{(t-1)}\big] \big| B_s \big) = 1 - \frac{1}{t}\big(\lfloor\frac{t}{2}\rfloor+w_n - \lceil\frac{t}{2}\rceil + w_n\big),$
\end{center}
 because the expression in parenthesis (which is at most $2w_n$) counts the number of intervals (out of $t$ intervals) that are forbidden for $x_t$.
This justifies that
\begin{equation}\label{falls_out}
\text{Pr}\big( x_t \text{ falls outside } \big[y_{\lceil\frac{t}{2}\rceil-w_n}^{(t-1)}, y_{\lfloor\frac{t}{2}\rfloor+w_n }^{(t-1)}\big] \big| B_s \big) \geq 1-\frac{2w_n}{t},
\end{equation}
 For the event that $x_s$ falls into the interval $ \big[y_{\lceil\frac{s}{2}\rceil-w_n }^{(s-1)}, y_{\lfloor\frac{s}{2}\rfloor+w_n}^{(s-1)}\big]$  given $B_s$ to happen, we can observe that $s - 1$ numbers that came before $x_s$ must all be outside the interval $ \big[\frac{1}{2}-d, \frac{1}{2}+d \big]$, where $d = \big|x_s - \frac{1}{2}\big|$.
 In order for $x_s$ to have a proper rank, such that the algorithm will stop on it, we must have $j$ numbers out of $s-1$ to fall into the left interval $ \big[ 0, \frac{1}{2}-d \big),$ where $j$ satisfies the inequality
  $\lceil\frac{s}{2}\rceil-w_n \le j \le \lfloor \frac{s}{2}\rfloor + w_n - 1.$  Therefore,
\begin{align} \label{falls_in}
\begin{split}
 \text{Pr}\big( x_s \text{ falls into } \big[y_{\lceil\frac{s}{2}\rceil-w_n }^{(s-1)}, y_{\lfloor\frac{s}{2}\rfloor+w_n}^{(s-1)}\big] \big| B_s \big) & = \sum\limits_{j=\lceil\frac{s}{2}\rceil-w_n}^{\lfloor \frac{s}{2} \rfloor +w_n - 1} \binom{s-1}{j}\frac{1}{2^{s-1}} \\
& \ge  \frac{1}{2^{s-1}}\cdot (2w_n-1) \cdot \binom{s-1}{\lceil \frac{s}{2}\rceil-w_n}.
\end{split}
\end{align}
Using inequalities (\ref{falls_out}) and (\ref{falls_in}), we get
\begin{align*}
\begin{split}
\text{Pr}\big( \mathcal{A}(h_n, w_n) \text{ succeeds}\big) & \ge \frac{1}{n}\sum\limits_{s=h_n}^{n} \left(\prod\limits_{t=h_n}^{s-1} \big( 1 - \frac{2w_n}{t}\big)\right)  \frac{2w_n-1}{2^{s-1}}  \binom{s-1}{\lceil \frac{s}{2} \rceil - w_n}  \\
&\ge  \frac{1}{n}  \frac{2w_n-1}{2^{n-1}}  \binom{n-1}{\frac{n-1}{2}-w_n}\sum\limits_{s=h_n}^{n}\big( 1 - \frac{2w_n}{h_n}\big)^{s-h_n} , 
\end{split}
\end{align*}
where the last inequality uses Corollary \ref{cor_dec} about monotonicity of $ \frac{1}{2^{s-1}}  \binom{s-1}{\lceil \frac{s}{2} \rceil-w_n} $.  After changing the index of summation in the last sum, we get
\begin{align*}
\begin{split}
\text{Pr}\big( \mathcal{A}(h_n, w_n) & \text{ succeeds}\big) \ge  \frac{2w_n-1}{n2^{n-1}}  \binom{n-1}{\frac{n-1}{2}-w_n}\sum\limits_{s=0}^{n-h_n}\Big( 1 - \frac{2w_n}{h_n}\Big)^{s} \\
&=  \frac{2w_n-1}{n2^{n-1}}  \binom{n-1}{ \frac{n-1}{2}-w_n } \frac{1}{\frac{2w_n}{h_n}}\bigg(1 - \Big( 1 - \frac{2w_n}{h_n}\Big)^{n-h_n+1}\bigg) \\ 
& = v(h_n, w_n).
\end{split}
\end{align*}
Whenever $w_n \xrightarrow [n \to\infty ]{}  \infty$, we get
$$
v(h_n, w_n) \sim \frac{h_n}{n2^{n-1}} \binom{n-1}{ \frac{n-1}{2}-w_n } \bigg(1 - \Big( 1 - \frac{2w_n}{h_n}\Big)^{n-h_n}\bigg).
$$
\end{proof}

\begin{corollary}\label{C9}
If $h_n = \lceil n\big( 1 - \frac{f(n)}{g(n)}\big) \rceil$ is such that $f(n) \xrightarrow [n \to\infty ]{}  \infty$, \\
$f(n) = o(g(n))$ and integer sequence $w_n$ satisfies $w_n \xrightarrow [n \to\infty ]{}  \infty $, $ w_n = o(\sqrt{h_n})$, and $\frac{g(n)}{w_n} = o(f(n))$, then
$$\sqrt{n}~\text{Pr}\big( \mathcal{A}(h_n, w_n) \text{ succeeds}\big) \ge \sqrt{n}~v(h_n,w_n) \xrightarrow [n \to\infty ]{} \sqrt{\frac{2}{\pi}},$$
where inequality holds for sufficiently large $n$.
\end{corollary}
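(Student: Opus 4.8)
The plan is to apply Theorem~\ref{th_main} and then evaluate the limit of $\sqrt{n}\,v(h_n,w_n)$ by splitting it into three factors. First I would check that the hypotheses guarantee the applicability of Theorem~\ref{th_main} for all large $n$: since $w_n = o(\sqrt{h_n})$ and $h_n \le n$ we get $w_n = o(\sqrt{n})$, so $4w_n^2 < n$ eventually; and $f(n)/g(n) \to 0$ forces $0 < f(n)/g(n) < 1$ eventually, so $h_n = \lceil n(1 - f(n)/g(n)) \rceil \le n$. Theorem~\ref{th_main} then yields $\text{Pr}(\mathcal{A}(h_n,w_n)\text{ succeeds}) \ge v(h_n,w_n)$ for large $n$, so it remains to prove $\sqrt{n}\,v(h_n,w_n) \to \sqrt{2/\pi}$. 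Because $w_n \to \infty$, Theorem~\ref{th_main} also gives
$$v(h_n,w_n) \sim \frac{h_n}{n}\cdot\frac{1}{2^{n-1}}\binom{n-1}{\frac{n-1}{2}-w_n}\cdot\Big(1-\big(1-\tfrac{2w_n}{h_n}\big)^{n-h_n}\Big),$$
and I would show that, after multiplying by $\sqrt{n}$, these three factors converge to $1$, $\sqrt{2/\pi}$, and $1$ respectively.

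For the first factor, $h_n/n = 1 - f(n)/g(n) + O(1/n) \to 1$ since $f(n) = o(g(n))$. For the second, apply Corollary~\ref{C5} with $s = n-1$ and $w = w_n$ (valid since $w_n = o(\sqrt{n}) = o(n-1)$) to obtain $\binom{n-1}{\frac{n-1}{2}-w_n} \sim \sqrt{2}\,2^{n-1}/\big(\sqrt{\pi(n-1)}\,e^{2w_n^2/(n-1)}\big)$; hence
$$\sqrt{n}\cdot\frac{1}{2^{n-1}}\binom{n-1}{\frac{n-1}{2}-w_n} \sim \sqrt{\frac{2}{\pi}}\cdot\sqrt{\frac{n}{n-1}}\cdot e^{-2w_n^2/(n-1)} \longrightarrow \sqrt{\frac{2}{\pi}},$$
using $\sqrt{n/(n-1)} \to 1$ and $w_n^2/(n-1) \to 0$ (the latter because $w_n = o(\sqrt{h_n})$ and $h_n \sim n$).

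The third factor is where the remaining hypotheses enter. From $w_n = o(\sqrt{h_n})$ we get $2w_n/h_n \to 0$, so $\ln(1 - 2w_n/h_n) \sim -2w_n/h_n$ and therefore $(n-h_n)\ln(1 - 2w_n/h_n) \sim -2w_n(n-h_n)/h_n$. Since $n - h_n = \lfloor n f(n)/g(n)\rfloor$ and $h_n \sim n$, this is $\sim -2w_n f(n)/g(n)$, which tends to $-\infty$ precisely because $g(n)/w_n = o(f(n))$ is equivalent to $w_n f(n)/g(n) \to \infty$; this also shows $n - h_n \to \infty$ (chaining $g(n)/w_n = o(f(n))$ with $w_n = o(\sqrt{n})$ gives $f(n)/g(n) \gg 1/\sqrt{n}$, so $n - h_n \gg \sqrt{n}$), hence the exponent is a genuine positive integer for large $n$. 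Thus $(1 - 2w_n/h_n)^{n-h_n} \to 0$, the third factor tends to $1$, and multiplying the three limits gives $\sqrt{n}\,v(h_n,w_n) \to \sqrt{2/\pi}$; combined with the inequality of Theorem~\ref{th_main} this proves the corollary.

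I expect the only delicate point to be the bookkeeping with the ceiling in $h_n$ and verifying that $n - h_n$ actually diverges, which is needed so that the factor $1 - (1 - 2w_n/h_n)^{n-h_n}$ does not degenerate; everything else is a routine application of Corollary~\ref{C5} together with elementary limits.
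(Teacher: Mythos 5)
Your proposal is correct and follows essentially the same route as the paper: apply Theorem \ref{th_main}, use Corollary \ref{C5} on the binomial factor with $w_n = o(\sqrt{n})$, and show $(1-2w_n/h_n)^{n-h_n}\to 0$ via $w_n f(n)/g(n)\to\infty$ (the paper writes this as $\bigl[(1-\tfrac{1}{h_n/2w_n})^{h_n/2w_n}\bigr]^{2w_n(n-h_n)/h_n}$ rather than taking logarithms, a cosmetic difference). Your explicit checks that $4w_n^2<n$ holds eventually and that $n-h_n\to\infty$ are fine and slightly more careful than the paper's exposition.
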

\begin{proof}
From Theorem \ref{th_main}, we get for sufficiently large $n$
$$\sqrt{n}~\text{Pr}\big( \mathcal{A}(h_n, w_n) \text{ succeeds}\big) \ge \sqrt{n}~v(h_n, w_n)$$
and
$$\sqrt{n}~v(h_n, w_n) \sim \sqrt{n}  \frac{h_n}{n2^{n-1}} \binom{n-1}{\frac{n-1}{2}-w_n} \bigg(1 - \Big( 1 - \frac{2w_n}{h_n}\Big)^{n-h_n}\bigg). $$
The last factor can be rewritten as \\
$$1 - \bigg[\Big( 1 - \frac{1}{\frac{h_n}{2w_n}}\Big)^{\frac{h_n}{2w_n}}\bigg]^{\frac{2w_n(n-h_n)}{h_n}}$$
and since $\frac{h_n}{2w_n}\xrightarrow [n \to\infty ]{}  \infty$ and $\frac{2w_n(n-h_n)}{h_n} = \frac{2w_n\Big(n- \big(n - \frac{f(n)}{g(n)}n\big)\Big)}{ \big(1 - \frac{f(n)}{g(n)}\big)n} =  \frac{2w_n \frac{f(n)}{g(n)}}{1 - \frac{f(n)}{g(n)}} \sim$ \\
$$ \sim  2w_n \frac{f(n)}{g(n)} = \frac{2f(n)}{\frac{g(n)}{w_n}} \xrightarrow [n \to\infty ]{}  \infty, \text{ because } \frac{g(n)}{w_n} = o(f(n)),$$
the last factor approaches $1$ as $n \to \infty$. \\
For the remaining factors, using Corollary \ref{C5}, we have
\begin{align*}
\begin{split}
\frac{\sqrt{n}  h_n}{n2^{n-1}} \binom{n-1}{\frac{n-1}{2}-w_n} &\sim \frac{\sqrt{n}  \big(1- \frac{f(n)}{g(n)} \big)}{2^{n-1}} \frac{\sqrt{2} \cdot 2^{n-1}}{\sqrt{\pi(n-1)}\cdot e^{2w_n^2/(n-1)}} \\
&\sim \sqrt{\frac{2}{\pi}} \frac{1}{e^{2w_n^2/n}} \sim \sqrt{\frac{2}{\pi}},
\end{split}
\end{align*}
because $w_n = o(\sqrt{n})$ and the result follows.
\end{proof}

There are choices of sequences $h_n$ and $w_n$ satisfying assumptions of Corollary \ref{C9}, for example $w_n = \lceil n^{1/3} \rceil$ and $h_n = \lceil n \big( 1 - \frac{\sqrt{\ln n}}{n^{1/3}}\big) \rceil$ (this choice of $h_n$ is not accidental, it equals $\lceil n-n^{2/3}\sqrt{\ln{n}} \rceil$, which is the simulated number of round till which algorithm $\mathcal{A}_n$ does not take any decision - consult the beginning of this section).  For these choices of the stopping region, the algorithm $\mathcal{A}(h_n, w_n) $ is bounded from below by the function which asymptotically behaves as $\frac{1}{\sqrt{n}}\sqrt{\frac{2}{\pi}}$. 
Since optimal algorithm $\mathcal{A}_n$ is not worse, this lower bound applies also to $\mathcal{A}_n$.
It remains to prove that the asymptotic upper bound for the performance of $\mathcal{A}_n$ is the same.

\begin{theorem}\label{}
For the online decision problem of stopping on the number closest to $\frac{1}{2}$ with $n$ numbers coming randomly from the interval $[0, 1]$ with the knowledge of their ranks only, the optimal stopping algorithm 
$\mathcal{A}_n$ has asymptotic performance
$$\text{Pr}\big( \mathcal{A}_n \text{ succeeds} \big) \sim \frac{1}{\sqrt{n}} \sqrt{\frac{2}{\pi}}.$$
\end{theorem}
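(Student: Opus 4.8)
The plan is to finish the proof by producing an upper bound matching the lower bound already in hand. Since $\mathcal{A}_n$ is optimal, the remark following Corollary~\ref{C9} (take $w_n=\lceil n^{1/3}\rceil$ and $h_n=\lceil n(1-\sqrt{\ln n}\,/\,n^{1/3})\rceil$) already gives
$$
\liminf_{n\to\infty}\sqrt{n}\;\text{Pr}\big(\mathcal{A}_n\text{ succeeds}\big)\ \ge\ \liminf_{n\to\infty}\sqrt{n}\;\text{Pr}\big(\mathcal{A}(h_n,w_n)\text{ succeeds}\big)\ \ge\ \sqrt{\tfrac{2}{\pi}},
$$
so the whole task reduces to proving $\limsup_{n\to\infty}\sqrt{n}\;\text{Pr}(\mathcal{A}_n\text{ succeeds})\le\sqrt{2/\pi}$.

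For this I would compare the original problem with a more generous online problem $\mathcal{E}_n$, identical to ours except that, at the moment $x_t$ is revealed, the player is additionally told $\rho_t$, the rank of $x_t$ among \emph{all} $n$ numbers. First I would verify that $\mathcal{E}_n$ is genuinely a relaxation: the relative order of $x_1,\dots,x_t$ is a deterministic function of $(\rho_1,\dots,\rho_t)$, so the information filtration available in the original problem is coarser than the one available in $\mathcal{E}_n$. Hence every stopping rule legitimate in the original problem — in particular the one used by $\mathcal{A}_n$ — is legitimate in $\mathcal{E}_n$, and ``the stopped number is the best'' is literally the same event in both, so $\text{Pr}(\mathcal{A}_n\text{ succeeds})\le\sup_\tau\text{Pr}(\mathcal{E}_n\text{ succeeds under }\tau)$.

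Next I would evaluate the right-hand side exactly. The rank vector $(\rho_1,\dots,\rho_n)$ is a uniformly random permutation of $\{1,\dots,n\}$ and, as is classical for i.i.d.\ continuous samples, it is independent of the ordered values $y_1<\dots<y_n$ of $x_1,\dots,x_n$. Consequently the event ``$y_r$ is the best'' is independent of the $\mathcal{E}_n$-filtration $\mathcal{F}_t=\sigma(\rho_1,\dots,\rho_t)$, and Theorem~\ref{bestprob} yields, for any $\mathcal{E}_n$-stopping time $\tau$, using $\{\tau=t,\ \rho_t=r\}\in\mathcal{F}_t$,
$$
\begin{aligned}
\text{Pr}\big(\mathcal{E}_n\text{ succeeds under }\tau\big)
&=\sum_{t,r}\text{Pr}(\tau=t,\ \rho_t=r)\,\text{Pr}(y_r\text{ is the best})\\
&=\mathbb{E}\!\left[\frac{1}{2^{\,n-1}}\binom{n-1}{\rho_\tau-1}\right]
\ \le\ \frac{1}{2^{\,n-1}}\binom{n-1}{\lfloor(n-1)/2\rfloor}.
\end{aligned}
$$
By Lemma~\ref{L3} (Stirling applied to the central binomial coefficient) the last quantity is $\sim\sqrt{2/(\pi n)}=\frac{1}{\sqrt{n}}\sqrt{\frac{2}{\pi}}$, giving the desired $\limsup$ bound; combined with the $\liminf$ bound above this proves $\text{Pr}(\mathcal{A}_n\text{ succeeds})\sim\frac{1}{\sqrt{n}}\sqrt{\frac{2}{\pi}}$.

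I expect the only genuinely delicate steps to be the two structural claims about $\mathcal{E}_n$: that it really is a relaxation (the filtration-refinement argument, stated carefully enough to transport an arbitrary stopping rule), and that revealing the \emph{final} rank of $x_t$ conveys nothing about which order statistic $1/2$ lands next to, so the conditional win probability collapses to exactly $P_r^{(n)}=\binom{n-1}{r-1}2^{-(n-1)}$ regardless of the player's history. Everything afterwards — the maximization over $r$, the Stirling estimate, and invoking Corollary~\ref{C9} for the lower bound — is routine.
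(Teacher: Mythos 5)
Your proposal is correct, and its overall architecture matches the paper's: the lower bound is obtained exactly as in the paper, by invoking Corollary~\ref{C9} for the suboptimal algorithm $\mathcal{A}(h_n,w_n)$ and using optimality of $\mathcal{A}_n$. For the upper bound you use a different relaxation than the paper does. The paper lets the decision maker wait until all $n$ numbers are revealed and then pick any of them, so the optimal value of the relaxed problem is immediately $\max_r \text{Pr}(y_r\text{ is the best})=\binom{n-1}{\lfloor (n-1)/2\rfloor}2^{-(n-1)}$ by Theorem~\ref{bestprob}, and Lemma~\ref{L3} finishes. You instead keep the online stopping structure but reveal to the player the \emph{final} rank $\rho_t$ of each arriving number; this is indeed a refinement of the relative-rank filtration (since $x_i<x_j$ iff $\rho_i<\rho_j$), and your key computation — that for any stopping time adapted to $\sigma(\rho_1,\dots,\rho_t)$ the success probability equals $\mathbb{E}\bigl[\binom{n-1}{\rho_\tau-1}2^{-(n-1)}\bigr]$ — is justified by the classical independence of the rank permutation and the order statistics for i.i.d.\ continuous samples, which makes the event ``$y_r$ is the best'' independent of $\{\tau=t,\rho_t=r\}$. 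So your route needs this extra filtration/independence argument that the paper's offline-selection relaxation avoids, but in exchange it exhibits an explicit informational relaxation under which the central-binomial bound is attained by an adapted stopping rule (stop when $\rho_t$ is the middle rank); both relaxations have exactly the same optimal value and both reduce to Theorem~\ref{bestprob} plus the Stirling estimate of Lemma~\ref{L3}, yielding $\text{Pr}(\mathcal{A}_n \text{ succeeds})\sim \frac{1}{\sqrt{n}}\sqrt{\frac{2}{\pi}}$.
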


\begin{proof}
From the analysis of the algorithm $\mathcal{A}(h_n, w_n)$ (Corollary \ref{C9}), we know that the performance of $\mathcal{A}_n$ may be bounded from below by the function which asymptotically behaves as $\frac{1}{\sqrt{n}}\sqrt{\frac{2}{\pi}}$.


To find the upper bound, we consider an online decision problem that is much easier than the problem in question.
Suppose that $n$ numbers from the interval $[0, 1]$ are revealed one by one and we know their relative ranks at any time $t$, $1 \le t \le n$.  After revealing all $n$ numbers, we can select any number we like, not necessarily the last one. Our aim is still the same: maximizing the probability of choosing the element which is closest to $\frac{1}{2}$. Then the optimal strategy is simple. From Theorem \ref{bestprob}, we know that we have to select the number of rank $r$ such that the binomial coefficient $\binom{n-1}{r-1}$ has maximum value. This happens if 
$r - 1 = \lfloor \frac{n-1}{2} \rfloor$ or $r -1  = \lceil \frac{n-1}{2} \rceil$.  Then 
$$ \text{Pr}\big( x_r \text{ is the best } \big) = \binom{n-1}{\lfloor \frac{n-1}{2} \rfloor} \cdot \frac{1}{2^{n-1}} $$
and using Lemma \ref{L3}, we get
$$ \text{Pr}\big( x_r \text{ is the best } \big) \sim \binom{n-1}{ \frac{n-1}{2}} \cdot \frac{1}{2^{n-1}} \sim \frac{\sqrt{2}\cdot 2^{n-1}}{\sqrt{\pi n}} \cdot \frac{1}{2^{n-1}} =  \frac{1}{\sqrt{n}} \sqrt{\frac{2}{\pi}},$$
which gives the asymptotic upper bound for the performance of $\mathcal{A}_n$.
\end{proof}

In Figures 3 and 4, we present the asymptotic behaviour of the performance of $\mathcal{A}_n$. We have there
$$
\tilde{v}(h_n, w_n) = \frac{h_n}{n2^{n-1 }} \binom{n-1}{\frac{n-1}{2}-w_n}\cdot \bigg(1 -  \Big(1 - \frac{2w_n}{h_n}\Big)^{n-h_n}\bigg)
$$
which is a function from Theorem \ref{th_main} reflecting the asymptotic behaviour of the function $v(h_n, w_n)$.
The choices of $h_n$ and $w_n$ are $h_n = \lceil n \big( 1 - \frac{\sqrt{\ln n}}{n^{1/3}}\big) \rceil$ and $w_n = \lceil n^{1/3} \rceil$.

\vspace{10pt}

\begin{figure}[!ht]
\begin{center}
\includegraphics[width = 0.8 \textwidth]{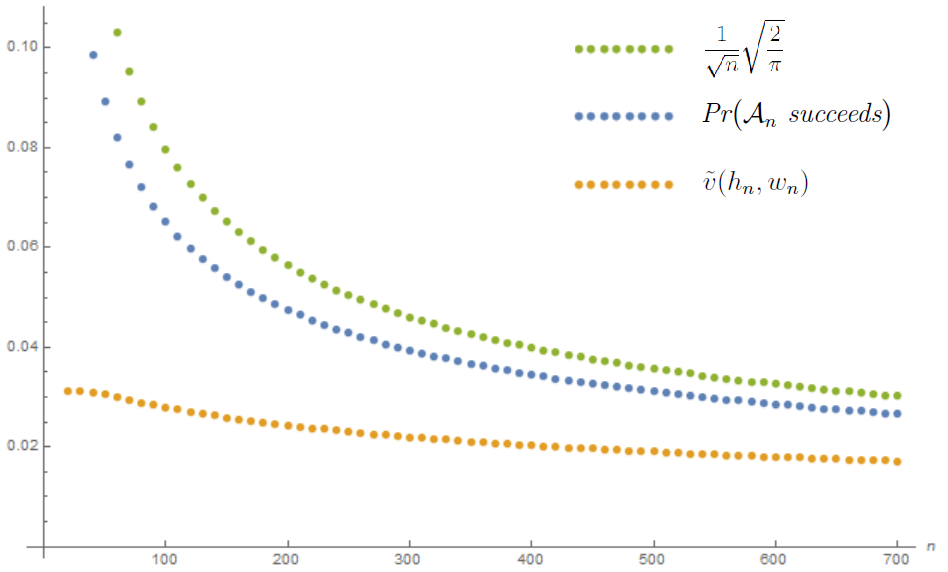} \\
\textbf{Figure 3.} Performance of $\mathcal{A}_n$ with its asymptotic bounds for $h_n = \lceil n \big( 1 - \frac{\sqrt{\ln n}}{n^{1/3}}\big) \rceil$ and $w_n = \lceil n^{1/3} \rceil$.
\end{center}
\end{figure}

\begin{figure}[!ht]
\begin{center}
\includegraphics[width = 0.8 \textwidth]{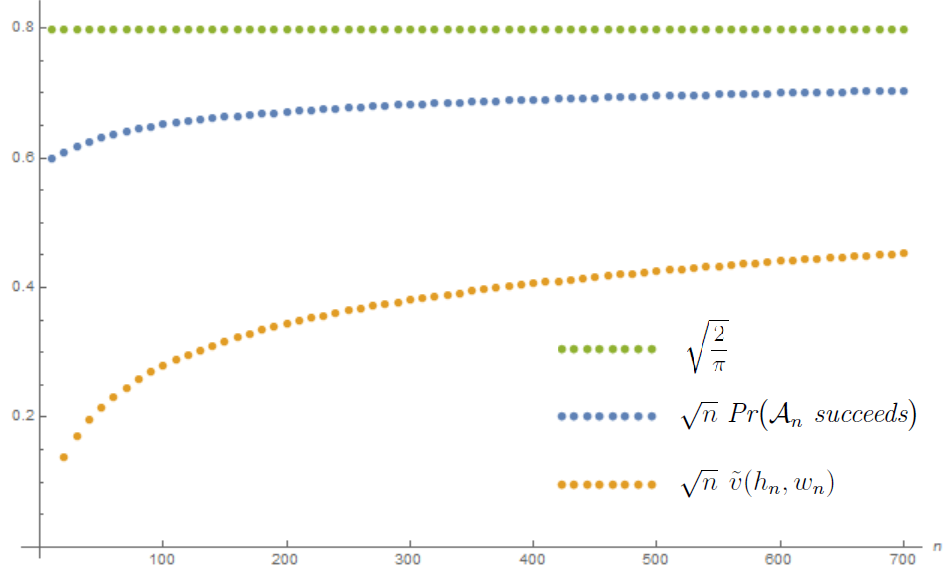}\\
\textbf{Figure 4.} Function $\sqrt{n}~\text{Pr}\big( \mathcal{A}_n \text{ succeeds}\big)$ with its asymptotic bounds for $h_n = \lceil n \big( 1 - \frac{\sqrt{\ln n}}{n^{1/3}}\big) \rceil$ and $w_n = \lceil n^{1/3} \rceil$.
\end{center}
\end{figure}	

\section{Final remarks}

If the interval $[0,1]$ is replaced by an interval $[a,b]$, with $a < b$, and the goal is to stop on the element closest to its midpoint, then the optimal stopping algorithm is identical to our algorithm $\mathcal{A}_n$.

How the situation changes if we sequentially observe $n$ numbers from the interval $[0,1]$, but we are informed about the value of each number drawn? Since we now know whether the revealed number is smaller or greater than $\frac{1}{2}$, by replacing each $x_k$ greater then $\frac{1}{2}$ by $1 - x_k$, we get the problem equivalent to finding the maximum element of the sequence of $n$ numbers. This problem was solved by Gilbert and Mosteller \cite{GM} and the optimal strategy in the process they called 'the full-information game' has asymptotic performance around 0.580164.  If we were interested in optimal stopping algorithm that would minimize the expected difference between selected number and $\frac{1}{2}$, then we could also adopt another stopping algorithm from \cite{GM} whose asymptotic performance is of order $\frac{1}{n}$.

\vspace{.5cm}

\noindent {\footnotesize \textbf{Ewa M. Kubicka}, University of Louisville, USA\\
\textbf{Grzegorz Kubicki}, University of Louisville, USA \\ 
\textbf{Ma{\l}gorzata Kuchta}, Wroc{\l}aw University of Science and Technology, Faculty of Fundamental Problems of Technology, Department of Computer Science, Poland\\
\textbf{Ma{\l}gorzata Sulkowska}, Wroc{\l}aw University of Science and Technology, Faculty of Fundamental Problems of Technology, Department of Computer Science, Poland}\\
\\

\noindent {\footnotesize e-mail addresses: 
ewa@louisville.edu,
gkubicki@louisville.edu,
malgorzata.kuchta@pwr.edu.pl, 
malgorzata.sulkowska@pwr.edu.pl}


\begin{thebibliography}{20} 


\bibitem{BS}
F.S. Benevides and M. Sulkowska, Percolation and best choice problem for powers of paths, J. Appl. Probab. {\bf 54} (2017), no. 2, pp. 343--362.

\bibitem{F} T.S. Ferguson,  Who solved the secretary problem? Statist. Sci. {\bf 4} (1989), pp. 282--296.

\bibitem{FW} R. Freij and J. W\"astlund, Partially ordered secretaries, Electron. Commun. Probab. {\bf 115} (2010), pp. 504--507.

\bibitem{GKMN}
N. Georgiou, M. Kuchta, M. Morayne and J. Niemiec,  On a universal best choice algorithm for partially ordered sets, Random Struct. Algor. {\bf 32} (2008), pp. 263--273.

\bibitem{GM}
J.P. Gilbert and F. Mosteller,  Recognizing the maximum of a sequence, J. Amer. Statist. Assoc. {\bf 61} (1966),  pp. 35--73.

\bibitem{GKK}
G. Goddard, E. Kubicka and G. Kubicki, An efficient algorithm for stopping on a sink in a directed graph, Oper. Res. Lett. {\bf 41} (2013), pp. 238--240.

\bibitem{KM}
G. Kubicki and M. Morayne, Graph-theoretic generalization of the secretary problem: the directed path case, SIAM J. Discrete Math. {\bf 19} (2005), no. 3, pp. 622--632.


\bibitem{L}
D.V. Lindley, Dynamic programming and decision theory, Appl. Statist. {\bf 10} (1961), pp. 39--51.

\bibitem{M}
M. Morayne, Partial-order analogue of the secretary problem; the binary tree case, Discrete Math. {\bf 184} (1998), pp. 165--181.

\bibitem{P}
J. Preater, The best-choice problem for partially ordered objects, Oper. Res. Lett. {\bf 25} (1999), pp. 187--190.

\bibitem{R}
P.A. Rogerson, Probabilities of choosing applicants of arbitrary rank in the secretary problem, J. Appl. Probab. {\bf 224} (1987), no. 2, pp. 527--533.

\bibitem{S}
W. Stadje, Efficient stopping of a random series of partially ordered points, Lecture Notes in Econom. and Math. Systems {\bf 177} (1980), pp. 430--447.

\bibitem{Su}
M. Sulkowska, The best choice problem for upward directed graphs, Discrete Optimization {\bf 9} (2012), pp. 200--204.


\end{thebibliography}
\end{document}